\renewcommand*{\backref}[1]{}
\renewcommand*{\backrefalt}[4]{\quad \tiny
  \ifcase #1 (\textbf{NOT CITED.})%
  \or    (Cited on page~#2.)%
  \else   (Cited on pages~#2.)%
  \fi}
\def\MRbibitem{\@ifnextchar[\my@lbibitem\my@bibitem}
\def\mybiblabel#1#2{\@biblabel{{\hyperref{http://www.ams.org/mathscinet-getitem?mr=#1}{}{}{#2}}}}
\def\myhyperanchor#1{\Hy@raisedlink{\hyper@anchorstart{cite.#1}\hyper@anchorend}}
\def\my@lbibitem[#1]#2#3#4\par{%
  \item[\mybiblabel{#2}{#1}\myhyperanchor{#3}\hfill]#4%
  \@ifundefined{ifbackrefparscan}{}{\BR@backref{#3}}%
  \if@filesw{\let\protect\noexpand\immediate
    \write\@auxout{\string\bibcite{#3}{#1}}}\fi\ignorespaces%
}
\def\my@bibitem#1#2#3\par{%
  \refstepcounter\@listctr
  \item[\mybiblabel{#1}{\the\value\@listctr}\myhyperanchor{#2}\hfill]#3%
  \@ifundefined{ifbackrefparscan}{}{\BR@backref{#2}}%
  \if@filesw\immediate\write\@auxout
    {\string\bibcite{#2}{\the\value\@listctr}}\fi\ignorespaces%
}
\declaretheorem{theorem}
\declaretheorem[numberwithin=section]{otherthm}
\declaretheorem[sibling=otherthm]{lemma}
\declaretheorem[sibling=otherthm]{corollary}
\declaretheorem[sibling=otherthm]{proposition}
\declaretheorem[sibling=otherthm,style=remark]{remark}
\declaretheorem[name=Acknowledgements, style=remark, numbered=no]{ack}
\declaretheorem[, name=Funding, style=remark, numbered=no]{fund}
\numberwithin{equation}{section}     
\setlist[enumerate,1]{label={\upshape(\alph*)},ref=\alph*}
\setlist[enumerate,2]{label={\upshape(\arabic*)},ref=\arabic*}
\newcommand{\R}{\mathbb{R}}
\newcommand{\Z}{\mathbb{Z}}
\newcommand{\N}{\mathbb{N}}
\renewcommand{\P}{\mathbb{P}}
\newcommand{\cC}{\mathcal{C}}
\newcommand{\cG}{\mathcal{G}}\newcommand{\cH}{\mathcal{H}}
\newcommand{\cM}{\mathcal{M}}
\newcommand{\cS}{\mathcal{S}}\newcommand{\cU}{\mathcal{U}}
\newcommand{\st}{\;\mathord{;}\;}
\newcommand{\disagree}{\mathbin{\dagger}}
\DeclareBoldMathCommand\ba{a}
\DeclareBoldMathCommand\bb{b}
\newcommand{\mediumint}{\int}
\newcommand{\dd}{\,\mathrm{d}}   
\DeclareMathOperator{\Lip}{Lip}
\DeclareMathOperator{\var}{var}
\DeclareMathOperator{\gap}{gap}
\DeclareMathOperator{\ergsup}{erg\,sup}
\DeclareMathOperator{\per}{per}
\DeclareMathOperator{\supp}{supp}
\newcommand{\lock}{\mathsf{L}_\ba}
\newcommand{\Leb}{\mathrm{Leb}}
\newcommand*\circled[1]{\tikz[baseline=(char.base)]{
    \node[shape=circle,draw,inner sep=1pt] (char) {\footnotesize{#1}};}}
\newcommand{\arxiv}[1]{Preprint \href{http://arxiv.org/abs/#1}{arXiv:{#1}}}
\renewcommand{\epsilon}{\varepsilon}
\renewcommand{\phi}{\varphi}
\renewcommand{\setminus}{\smallsetminus}
\renewcommand{\emptyset}{\varnothing}
\DeclareFontFamily{OMX}{MnSymbolE}{}
\DeclareSymbolFont{MnLargeSymbols}{OMX}{MnSymbolE}{m}{n}
\DeclareFontShape{OMX}{MnSymbolE}{m}{n}{
    <-6>  MnSymbolE5
   <6-7>  MnSymbolE6
   <7-8>  MnSymbolE7
   <8-9>  MnSymbolE8
   <9-10> MnSymbolE9
  <10-12> MnSymbolE10
  <12->   MnSymbolE12
}{}
\DeclareFontShape{OMX}{MnSymbolE}{b}{n}{
    <-6>  MnSymbolE-Bold5
   <6-7>  MnSymbolE-Bold6
   <7-8>  MnSymbolE-Bold7
   <8-9>  MnSymbolE-Bold8
   <9-10> MnSymbolE-Bold9
  <10-12> MnSymbolE-Bold10
  <12->   MnSymbolE-Bold12
}{}
\let\llangle\@undefined
\let\rrangle\@undefined
\DeclareMathDelimiter{\llangle}{\mathopen}%
                     {MnLargeSymbols}{'164}{MnLargeSymbols}{'164}
\DeclareMathDelimiter{\rrangle}{\mathclose}%
                     {MnLargeSymbols}{'171}{MnLargeSymbols}{'171}
\begin{document}

\title{Ergodic optimization of prevalent super-continuous
functions}
\date{\today}

\author{Jairo Bochi}
\author{Yiwei Zhang}

\subjclass[2010]{37D20, 90C05, 60B11, 52B12}


\begin{abstract}
Given a dynamical system, we say that
a performance function has property~P if its time averages along orbits are
maximized at a periodic orbit. It is conjectured by several authors
that for sufficiently hyperbolic dynamical systems, property~P
should be typical among sufficiently regular performance functions.
In this paper we address this problem using a
probabilistic notion of typicality that is suitable to infinite
dimension: the concept of prevalence as introduced by Hunt, Sauer,
and Yorke. For the one-sided shift on two symbols, we prove that
property~P is prevalent in spaces of functions with a strong modulus
of regularity. Our proof uses Haar wavelets to approximate the
ergodic optimization problem by a finite-dimensional one, which can
be conveniently restated as a maximum cycle mean problem on a
de~Bruijin graph.
\end{abstract}

\maketitle

\section{Introduction}

\subsection{Are optimal orbits typically periodic?}

Consider a continuous transformation $T \colon X \to X$
on a compact metrizable space,
and a continuous ``performance'' function $f \colon X \to \R$.
The main questions of \emph{ergodic optimization} are:
Along which orbits of $T$ are the time averages of $f$ largest?
What is the statistics of those orbits?
Equivalently, for which invariant probability measures $\mu$ is average performance $\int f \dd \mu$ maximal?
What properties do these so-called \emph{maximizing measures} have?
What happens as $f$ changes?
The subject was nicely surveyed by Jenkinson \cite{Jenkinson_survey}.
Some maximizing measures of special interest in thermodynamic formalism are
the so-called \emph{ground states}: see~\cite{BLL}.

If the dynamical system $T$ has few invariant measures then the questions above
become uninteresting.
So it is natural to focus the study on chaotic systems, as for example
uniformly hyperbolic or uniformly expanding ones.
These systems possess many invariant probability measures,
the simplest of which are the \emph{periodic measures}
(i.e.\ those supported on periodic orbits).
Let us say that the performance function
satisfies \emph{property $\mathsf{P}$} with respect the dynamical system $T$
if it has a periodic maximizing measure.

Based on experimental results and heuristic arguments,
Hunt and Ott \cite{HO} conjectured that for \emph{typical} chaotic systems and \emph{typical} parameterized families of smooth functions, property~$\mathsf{P}$ holds for \emph{most} values of the parameter. Here ``most'' means full Lebesgue measure in the parameter space (but the meanings of ``typical'' are not specified).
One of the examples studied by these authors consisted on the doubling map
$T(\theta) = 2\theta$ on the circle $\R/2\pi\Z$
together with the family of functions formed by linear combinations of cosines and sines. For this specific case, the conjecture was later confirmed by Bousch \cite{Bousch_poisson}.
(See also \cite{Jenkinson_fish}.)

A different conjecture by Yuan and Hunt \cite{YH} is that for every uniformly hyperbolic or uniformly expanding map, property~$\mathsf{P}$ holds for \emph{topologically generic} $C^1$ performance functions, i.e., it holds for every function in a dense $G_\delta$ subset.
As these authors observe, the interior of property~$\mathsf{P}$ is dense in itself, so if the conjecture is true then property~$\mathsf{P}$ actually holds for an open and dense subset of the space of $C^1$ functions.

Regularity assumptions are important.
Assume that $T$ is a ``hyperbolic'' (e.g., expanding) map.
Then maximizing measures of topologically generic continuous function have full support (see \cite{Bousch_Walters,Jenkinson_survey,Morris_C0});
in particular the Yuan--Hunt conjecture fails dramatically in the space of continuous functions.
On the other hand, every sufficiently regular function (e.g., a H\"older one)
has the following agreeable property: there exists a compact invariant set $K$
such that an invariant probability measure $\mu$ is maximizing if and only if $\mu(K)=1$.
This fact and related ones (especially the so-called Ma\~n\'e lemma) were discovered independently by many authors in various settings: see \cite{Bousch_Walters,Bousch_Mane} and references therein.

Much research was done about property~$\mathsf{P}$
and related ones on spaces of sufficiently regular functions: \cite{YH,CLT,Bousch_Walters,BQ,Morris_entropy,QS}.
We highlight the recent work of Contreras~\cite{Contreras_EO},
who showed that this property is open and dense among Lipschitz functions
when the dynamics is expanding.

Let us also mention that many of the problems and results we have alluded to
were originally formulated in the context of classical mechanics,
where optimization is naturally related with the principle of least action: see \cite{CI} and references therein.
There is a celebrated conjecture by Ma\~{n}\'{e}~\cite{Mane}
on the genericity of property~$\mathsf{P}$ in those contexts,
which was proved very recently by Contreras~\cite{Contreras_Mane}.

\smallskip

In this paper we further investigate the genericity of property~$\mathsf{P}$
in abstract ergodic optimization,
not in the topological sense but in a \emph{probabilistic} one,
thus in a spirit more akin to \cite{HO}.
The first step is to define ``probabilistic genericity'' in a space of functions.
Though a possible approach for this is suggested in \cite{HO},
we follow another route which seems more appropriate.

Probabilistic genericity should have something to do with having full measure.
Unfortunately, topological vector spaces do not have any natural reference measures,
except when they are finite-dimensional.
Worse, there is no natural equivalence class of measures.
On the positive side, what does exist (under minor technical assumptions)
is a natural class of ``fat'' subsets,
which corresponds in the finite-dimensional case to the class of subsets of full Lebesgue measure.
This is the class of \emph{prevalent} sets
discovered by Hunt, Sauer, and Yorke \cite{HSY},
though it was previously introduced in a slightly less general setting by Christensen \cite{Christ}. The surveys \cite{OY,HK} contain much information on the subject, including generalizations, applications, and open problems in many areas -- especially Dynamical Systems.

In general, prevalence and topological genericity are neither one stronger than the other;
indeed the references above contain examples of many properties of natural interest that are generic in one sense but not in the other.
However, for properties known to be open (or to contain an open dense subset), topological genericity means only denseness, which is always weaker than prevalence.

Thus we are inevitably led to the the following question:
Does a prevalent version of the theorem of Contreras~\cite{Contreras_EO} hold?
In other words, if the dynamics is expanding,
is property~$\mathsf{P}$ prevalent among Lispchitz functions?

Though being currently unable to solve the problem in full generality,
we do answer the question positively under further assumptions.
First, we deal with the simplest interesting dynamics in this context:
the full one-sided shift on two symbols.
Second, we work on spaces of \emph{super-continuous} functions,
i.e.\ functions that have a very strong modulus of regularity.
The setting comes from the work of Quas and Siefken~\cite{QS},
who proved denseness of property~$\mathsf{P}$ in some of these spaces.

Let us proceed with the exact definitions and statements.

\subsection{Precise statements}\label{ss.precise}

Let $V$ be a completely metrizable
vector space. Following \cite{HSY}, we say that a Borel set $S
\subset V$ is \emph{shy} if there exists a compactly supported Borel
probability measure $\mu$ such that
$$
\mu(S + v) = 0 \quad \text{for every $v\in V$.}
$$
In that case we say that $\mu$ is \emph{transverse} to $S$.\footnote{If the space $V$ is separable then the compactness hypothesis is basically avoidable. However we will deal with non-separable spaces.}

It is easy to see that if a set $S$ is shy then it has measures transverse to it with support of arbitrarily small diameter.
Therefore, given any element of $V$ it is possible to add to it a small random perturbation so that it falls outside of $S$ with probability $1$.

A set whose complement is contained in a shy set is called a \emph{prevalent} set.
Prevalence has the following properties (see \cite{HSY}):
it is preserved under translation, augmentation, and countable intersections,
it implies denseness, and it coincides with the notion of having full Lebesgue measure
when the ambient space is finite-dimensional.


\medskip

Next, let us state more formally the setting for ergodic optimization.
Let $(X,d)$ be a compact metric space, and $T \colon X \to X$ be a continuous map.
Let $\cM_T$ denote the set of $T$-invariant Borel probability measures.
If $\mu \in \cM_T$ is supported on a periodic orbit then it is called a \emph{periodic measure}.
Let $C^0(X)$ denote the space of real-valued continuous functions on $X$.
The \emph{ergodic supremum} of $f \in C^0(X)$ is defined as
\begin{equation}\label{e.ergsup}
\ergsup (f) \coloneqq \sup_{\mu \in \cM_T} \langle f , \mu \rangle \, ,
\end{equation}
where angle brackets denote integration.
A measure $\mu \in \cM_T$ where this supremum is attained is called
a \emph{maximizing measure} for $f$.
By continuity and compactness, maximizing measures always exist.
The ergodic supremum can also be expressed in terms of maximal time averages of $f$: see \cite{Jenkinson_survey}.
If one of these is periodic then we say that $f$ has \emph{property~$\mathsf{P}$.}

Let $C^{\Lip}(X) \subset C^0(X)$ be the set of Lipschitz functions,
endowed with the \emph{Lipschitz norm}
$\| \mathord{\cdot} \|_{\Lip} \coloneqq \| \mathord{\cdot} \|_{\infty}
+ \Lip( \mathord{\cdot})$,
i.e.\ the sum of the supremum norm and the least Lipschitz constant.
Then $C^{\Lip}(X)$ is a Banach space,
and it is nonseparable unless $X$ is countable,
because it contains the discrete subset $\{d(x,\mathord{\cdot}) \st x\in X\}$.

We say that a function $f \in C^{\Lip}(X)$
has the \emph{locking\footnote{This is a free translation of the \emph{verrouillage} term used in \cite[\S 8]{Bousch_Walters}.} property} if
it has a unique maximizing measure $\mu$
that is periodic (in particular, $f$ has property~$\mathsf{P}$)
and moreover $\mu$ is the unique maximizing measure
for every function sufficiently close to $f$ in the Lipschitz norm.
We can also say that $f$ and all functions close to it are \emph{locked} to the measure $\mu$.
Of course, the locking property depends on the metric $d$.

The set of functions with the locking property is exactly the interior of the set
of functions with property~$\mathsf{P}$:
one inclusion is obvious from the definition,
and the other was shown by Yuan and Hunt \cite[Remark~4.5]{YH}\footnote{Actually this exact statement is not proved in the paper \cite{YH} but it follows by an adaptation of the arguments. The interested reader may try to figure out the details himself or check our note \href{http://www.mat.uc.cl/~jairo.bochi/docs/lock.pdf}{www.mat.uc.cl/$\sim$jairo.bochi/docs/lock.pdf}.}.
In particular, the locking property is dense among the functions with property~$\mathsf{P}$.
We will henceforth focus our attention on the locking property.

\smallskip

We work in the symbolic space
$\Omega \coloneqq \{0,1\}^\N$, where $\N$ includes $0$.
Suppose $\ba = (a_n)_{n \in \N}$ is a strictly decreasing sequence
of positive numbers converging to zero, or in symbols $\ba \searrow 0$.
Then we define a metric $d_\ba$ (actually an ultrametric) on $\Omega$ as follows:
given two sequences $x = (x_n)$, $y = (y_n) \in \Omega$, if they are different we set
$d_\ba(x,y) \coloneqq a_m$ where $m$ is their first disagreement
(i.e., the least position where the symbols differ),
otherwise we set $d_\ba(x,y) \coloneqq 0$.
All metrics of this form induce the product topology on $\Omega$.

Let $C^\ba(\Omega) \coloneqq C^{\Lip}(\Omega)$ where $\Omega$ is
endowed with the metric $d_\ba$; in other words $C^\ba(\Omega)$ is
the space of functions on $\Omega$ that are Lipschitz with respect
to the metric $d_\ba$. As explained above, this is a non-separable Banach space.

Of course, the faster the sequence $\ba$ decays, the more regular are the functions in $C^\ba(\Omega)$.
As an example, consider geometric sequences $(\theta^n)_{n \in \N}$
with factor $\theta \in (0,1)$;
then $f$ is $\alpha$-H\"older with respect to the metric $d_{(\theta^n)}$
if and only if $f \in C^{(\theta^{\alpha n})}(\Omega)$.

\smallskip

Let us consider ergodic optimization on the function spaces $C^\ba(\Omega)$
with respect to the shift dynamics $\sigma \colon \Omega \to \Omega$.
Let
$$
\lock \subset C^{\ba}(\Omega)
$$
denote the set of $d_\ba$-Lipschitz functions with the locking property.

\smallskip

It was proved by Quas and Siefken~\cite{QS} that if the sequence $\ba = (a_n)_{n \in \N}$
is such that $\lim a_{n+1}/a_n = 0$ then $\lock$ is
dense (and thus topologically generic) in $C^\ba(\Omega)$.
Contreras~\cite{Contreras_EO} obtains the same conclusion under the weaker condition
$\sup a_{n+1}/a_n < 1$.
(Actually, Quas and Siefken deal with one-sided shifts in an arbitrary finite number of symbols,
while Contreras deals with arbitrary expanding maps.)

Our aim is to replace denseness by prevalence in this kind of results.
However we need an even stronger modulus of regularity.
Let us say that a sequence $\ba = (a_n)_{n \in \N}$ is \emph{evanescent}
if $\ba \searrow 0$ and
\begin{equation}\label{e.eva}
\frac{a_{n+1}}{a_n} = O \left( 2^{-2^{n+2}} \right) \, .
\end{equation}
In this paper, we prove the following:

\begin{theorem}\label{t.prevalence}
If $\ba$ is an evanescent sequence then $\lock$ is a prevalent set in $C^\ba(\Omega)$.
\end{theorem}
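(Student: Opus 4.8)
The plan is to exhibit one compactly supported Borel probability measure $\mu$ on $C^\ba(\Omega)$, not depending on $f$, such that $f+v\in\lock$ for $\mu$-almost every $v$, for every $f\in C^\ba(\Omega)$; since $\lock$ is open (being the interior of property~$\mathsf P$), this makes $C^\ba(\Omega)\setminus\lock$ shy, hence $\lock$ prevalent. The main device is the Haar wavelet decomposition of $C^\ba(\Omega)$: writing $E_n$ for the $2^n$-dimensional space of functions depending only on the first $n$ coordinates and $D_n=E_{n+1}\ominus E_n$ for the detail spaces, every $f$ is the uniformly convergent sum $f=\sum_n\Delta_nf$ of its details $\Delta_nf\in D_n$, membership $f\in C^\ba(\Omega)$ is equivalent (up to multiplicative constants) to $\|\Delta_nf\|_\infty=O(a_n)$, and the truncations $f_{\le n}\in E_n$ (the averages of $f$ over length-$n$ cylinders) satisfy $\|f-f_{\le n}\|_\infty\le\Lip(f)\,a_n$. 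The structural point is that for $w\in E_n$ the ergodic optimization problem under $\sigma$ is a maximum cycle mean problem on the de~Bruijn graph $B(2,n-1)$: its vertices are the $2^{n-1}$ words of length $n-1$, its edges the $2^n$ words of length $n$ carrying the values of $w$, periodic orbits correspond to cycles, and $\ergsup(w)$ is the largest mean weight over simple cycles. I would then take $\mu$ to be the law of $v=\sum_nv_n$ with the $v_n\in D_n$ independent and $v_n$ uniform on the $\|\cdot\|_\infty$-ball of radius $r_n\coloneqq 2^{-n}a_n$ in $D_n$; since $\sum_nr_n/a_n<\infty$, the support $\{v:\|v_n\|_\infty\le r_n\text{ for all }n\}$ is compact in $C^\ba(\Omega)$, so $\mu$ is a legitimate candidate transverse measure.

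The engine is a quantitative finite-dimensional form of the statement ``$\lock=\interior\mathsf P$'': if $w\in E_n$ has a unique optimal simple cycle $C^\ast$ in $B(2,n-1)$, and its Aubry set is rigid in the sense that it carries only the periodic measure on $C^\ast$, with an associated robustness margin $c(w)>0$ measuring how far the non-tight edges sit from tightness, then every $g\in C^\ba(\Omega)$ with controlled Lipschitz norm and with $\|g-w\|_\infty$ small compared with $c(w)$ (allowing a loss at most polynomial in $n$) belongs to $\lock$ and is locked to the periodic measure on $C^\ast$. Uniqueness of the optimal cycle already forces $w$ itself to have that periodic measure as its unique maximizing measure, since the optimal face of the circulation polytope is then a single vertex and a simple cycle admits no branching; robustness follows from the Ma\~n\'e--Bousch subaction method together with the upper semicontinuity of the Aubry set, the strong expansion and ultrametric structure of $(\Omega,\sigma,d_\ba)$ keeping the modulus quantitatively under control. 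Granting this, fix $f$, reveal the blocks $v_0,v_1,\dots$ one at a time, and at stage $n$ form $w_n\coloneqq f_{\le n}+v_0+\dots+v_{n-1}\in E_n$. The blocks not yet revealed, together with the high-frequency tail of $f$, contribute sup norm $\eta_n\le\|f-f_{\le n}\|_\infty+\sum_{k\ge n}r_k=O(a_n)$, with the constant depending only on $f$; so as soon as $c(w_n)$ exceeds $\eta_n$ by the polynomial factor above, the criterion applied to $g=f+v$ places $f+v$ in $\lock$. It therefore suffices to prove that, $\mu$-almost surely, $c(w_n)$ is that large for all sufficiently large $n$.

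This is the crux, and it is where the evanescence hypothesis \eqref{e.eva} enters essentially. The event that $c(w_n)$ is too small is contained in a union of events of the form ``a certain nonzero affine (or piecewise-affine) functional of the revealed noise $v_0,\dots,v_{n-1}$ takes a value of size $O(\eta_n)$'', one for each relevant pair of simple cycles of $B(2,n-1)$ whose mean weights nearly coincide and for each near-tight configuration of edges. The number of such functionals is doubly exponential in $n$, of order $2^{2^n}$, this being the order of magnitude of the number of simple cycles of $B(2,n-1)$. Each functional, viewed as affine in the finest noise block on which it genuinely depends, has a coefficient bounded below by $2^{-O(n)}$, while that block is uniform on a cube of side at least $r_{n-1}=2^{-(n-1)}a_{n-1}$, so the probability that it is $O(\eta_n)$-small is at most $2^{O(n)}\,\eta_n/a_{n-1}=2^{O(n)}\,(a_n/a_{n-1})$. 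Multiplying, the probability that $c(w_n)$ is too small is at most $2^{2^n}\,2^{O(n)}\,(a_n/a_{n-1})$. By \eqref{e.eva}, applied with $n-1$ in place of $n$, one has $a_n/a_{n-1}=O(2^{-2^{n+1}})$, and since $2^{2^n}\cdot 2^{-2^{n+1}}=2^{-2^n}$, this probability is at most $2^{-2^n}\,2^{O(n)}$, which is summable. Borel--Cantelli then gives $c(w_n)>\eta_n\cdot(\text{poly in }n)$ for all large $n$, almost surely, completing the argument.

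I expect the genuine difficulties to lie in two places. First, $C^\ba(\Omega)$ is non-separable, so finite-dimensional subspaces are nowhere dense and no single finite-dimensional reduction can work; one is forced into the scale-by-scale scheme, carefully tracking how finite truncation errors interact with the locking radius — in particular one needs the tail $\|f-f_{\le n}\|_\infty=O(a_n)$ to be vastly smaller than the random margin $c(w_n)$, which is exactly what the evanescence hypothesis is designed to permit. Second, the combinatorial transversality estimate: one must control the number of simple cycles of $B(2,n-1)$ (doubly exponential) together with the minimal size of the relevant affine functionals of the noise, and verify that the doubly exponential count is overcome by the evanescent decay — this is where the precise form of \eqref{e.eva} is consumed, with essentially no room to spare. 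The quantitative locking criterion, although morally a known statement, also requires care, since the approximants $w_n$ are locally constant and its robustness radius must be tied explicitly to the Aubry-set rigidity margin and shown to degrade at most polynomially across the scales.
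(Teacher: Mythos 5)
Your overall architecture is exactly that of the paper: a Hilbert brick measure built from Haar coefficients with radii $r_n$ satisfying an admissibility condition ($r_n=2^{-n}a_n$ is one of the paper's admissible gauges), a quantitative locking criterion tied to a ``gap'' for the level-$n$ truncation, the identification of step-function ergodic optimization with a maximum cycle mean problem on de~Bruijn graphs, a slice-of-a-box (Fubini) bound for the probability that any one affine functional of the noise is small, and the decisive numerology $2^{2^n}\cdot 2^{-2^{n+1}}=2^{-2^n}$ in which the doubly exponential cycle count is beaten by the evanescence hypothesis with a single-exponential $2^{-n}$ coefficient bound to spare. The paper proves the sharper Theorem~\ref{t.brick} (transversality of $\P_\bb$ for every admissible gauge and every base point $f_0$) and derives Theorem~\ref{t.prevalence} from it, but that is a cosmetic difference; you have the same proof.

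The one place where I would flag a genuine gap, rather than a mere omission, is the quantitative locking criterion that is the engine of the argument. You acknowledge this and gesture at the Ma\~n\'e--Bousch subaction method together with upper semicontinuity of the Aubry set. That route is qualitative: upper semicontinuity gives a robustness radius with no control, and the subaction calculus does not by itself yield a margin $c(w_n)$ that is comparable to the gap of the truncated polytope problem with only a polynomial loss in $n$. The paper instead proves this directly and combinatorially (Lemma~\ref{l.gap} and Corollary~\ref{c.gap}): it decomposes an arbitrary competitor measure $\nu$ by projecting to the circulation polytope $R_k$ at each level $k\ge n$, uses the ``basin of repellency'' $B_{\mu,n}$ of the optimal cycle and an arc-cancellation estimate (Lemma~\ref{l.cancel}) to bound the contribution of the level-$k$ Haar block by $2(k-n+1)\|g_k\|_\infty\,\xi(B_n^{\mathsf c})$, and then a monotonicity lemma (Lemma~\ref{l.monotonicity}) to sum the contributions across levels. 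This is where the explicit factor $(k-n+1)$ in the gap condition \eqref{e.gap} comes from, and it is exactly what your $\eta_n=O(a_n)$ bookkeeping needs. Without this explicit mechanism the rest of the proof --- the union bound over $2^{2^n}$ cycle pairs and the Borel--Cantelli step --- is conditional; with it, the argument closes precisely as you describe.
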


Since the set $\lock$ is open by definition,
and every prevalent set is dense,
we reobtain its topological genericity under the strong assumption~\eqref{e.eva}.
We stress that our methods are very different from those of \cite{QS,Contreras_EO},
however.

Currently we are not able to weaken the evanescence condition \eqref{e.eva} considerably.
The difficulties will be discussed later.
On the other hand, our methods should be easily extendable
to shifts on an arbitrary finite number of symbols
or more generally to subshifts of finite type.
Favoring clarity at the expense of generality,
we decided to work with shifts on $2$ symbols only.
This choice has the additional advantage of
being able to express the random perturbations leading to the proof of
Theorem~\ref{t.prevalence} in a quite convenient way.

\subsection{The random perturbations}

Let us describe explicitly certain probability measures that are transverse to the complement of
$\lock$.
Therefore adding to any given function in $C^\ba(\Omega)$
a random perturbation chosen according to one of those probability measures
one almost surely obtains a function in $\lock$.

\smallskip

Let $\Omega^*$ denote the set of all finite words in the letters $0$, $1$, including the empty word.
Each $\omega \in \Omega^*$ determines a \emph{cylinder} $[\omega]$
which is the set of elements of $\Omega$ that have $\omega$ as an initial subword.
The \emph{level} of the cylinder $[\omega]$ is defined as the length of the word $\omega$,
which is denoted by $|\omega|$.

For each $\omega \in \Omega^*$, 
we define the \emph{Haar function}:
\begin{equation}\label{e.Haar_function}
h_\omega \coloneqq \frac{\chi_{[\omega0]} - \chi_{[\omega 1]}}{2} \, ,
\end{equation}
where $\chi$ denotes characteristic functions and $\omega 0$, $\omega 1$ denote concatenated words.
These functions are a straightforward adaptation of the classical Haar functions on $[0,1]$ (see e.g.\ \cite[\S6.3]{Pinsky}).
Working on the Cantor set $\Omega$ has an advantage over working on the interval $[0,1]$:
our Haar functions \eqref{e.Haar_function} are continuous.
Our unusual choice of the normalization makes subsequent formulas simpler.

Let us mention a few facts about these functions, leaving the details for \S \ref{s.Haar_Hilbert}.
Haar functions, together with the constant function $1$, form an orthogonal set with respect to a natural inner product.
An arbitrary continuous function $f \in C^0(\Omega)$ can be uniquely represented as a \emph{Haar series}:
$$
f = c(f) + \sum_{\omega \in \Omega^*} c_\omega(f) h_\omega \, ,
$$
where $c(f)$, $c_\omega(f)$ are called the \emph{Haar coefficients} of $f$.

\smallskip

A \emph{gauge} is a family  $\bb = (b_\omega)_{\omega \in \Omega^*}$
of positive numbers indexed by words.
Suppose $\ba = (a_n)$ is an evanescent sequence in the sense \eqref{e.eva}.
We say that a gauge $\bb = (b_\omega)$ is \emph{admissible} with respect to $\ba$ if
\begin{equation}\label{e.adm}
\bar{b}_n  = o(a_n)
\quad \text{and} \quad
\log (a_n / \underline{b}_n) = O(n) \, .
\end{equation}
where
$$
\bar{b}_n \coloneqq \max_{|\omega| = n} b_\omega
\quad \text{and} \quad
\underline{b}_n \coloneqq \min_{|\omega| = n} b_\omega \, .
$$
In other words, $b_\omega$ becomes much (but not superexponentially) smaller than $a_n$
as $n = |\omega| \to \infty$.
Examples of admissible gauges are
$b_\omega = n^{-1} a_n$ and $b_\omega = 2^{-n} a_n$.

\begin{lemma}\label{l.intro}
Suppose that $\ba = (a_n)$ is an evanescent sequence,
and that $\bb = (b_\omega)$ is an admissible gauge with respect to $\ba$.
Given numbers $c_\omega$ satisfying $|c_\omega| \le b_\omega$ for all $\omega \in \Omega^*$,
the Haar series $\sum_{\omega \in \Omega^*} c_\omega h_\omega$
represents a unique function in $C^\ba(\Omega)$.
Moreover, the functions obtained in this way form a compact subset $\cH_\bb$ of $C^\ba(\Omega)$.
\end{lemma}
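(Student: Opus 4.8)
The plan is to exhibit $\cH_\bb$ as the continuous image of a compact product of intervals; both assertions then fall out of two estimates — a $C^\ba$-bound for a single Haar series and a uniform bound on its tail — which I would prove by fixing a pair of points and summing over levels.

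\textbf{Convergence and the first assertion.} The feature to exploit throughout is that for fixed $n$ the cylinders $[\omega]$, $|\omega|=n$, are pairwise disjoint, while $h_\omega$ is supported on $[\omega]$ with $\|h_\omega\|_\infty=\tfrac12$. Hence the level-$n$ block $S_n\coloneqq\sum_{|\omega|=n}c_\omega h_\omega$ has $\|S_n\|_\infty\le\tfrac12\bar{b}_n$. Since $\ba$ is evanescent (in particular $a_{n+1}/a_n\to0$) we have $\sum_n a_n<\infty$, so $\bar{b}_n=o(a_n)$ gives $\sum_n\bar{b}_n<\infty$, and $\sum_n S_n$ converges uniformly to a continuous $f$ with $\|f\|_\infty\le\tfrac12\sum_n\bar{b}_n$; the limit is unique, and by the uniqueness of Haar expansions recalled above its Haar coefficients are the prescribed $c_\omega$ (its constant coefficient being $0$). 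To see $f\in C^\ba(\Omega)$, fix $x\neq y$ with first disagreement at position $m$, so $d_\ba(x,y)=a_m$, and inspect which $h_\omega$ separate $x$ from $y$: if $|\omega|<m$ then $h_\omega(x)=h_\omega(y)$; if $|\omega|=m$ only the common $m$-prefix is relevant, with $|h_\omega(x)-h_\omega(y)|=1$; and for each level $k>m$ at most one word is relevant for $x$ and at most one — necessarily different — for $y$, each contributing at most $\tfrac12$ in modulus. Summing, $|f(x)-f(y)|\le\sum_{k\ge m}\bar{b}_k$. The key point is that level $k$ contributes $\bar{b}_k$ and not $\bar{b}_k/a_m$ — once more because equal-level cylinders are disjoint — so with $\bar{b}_k=O(a_k)$ and $\sum_{k\ge m}a_k=O(a_m)$ (a consequence of $a_{n+1}/a_n\to0$) we obtain $|f(x)-f(y)|\le L\,a_m$ with $L$ independent of $x,y$. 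Thus $\Lip_{d_\ba}(f)\le L$ and $f\in C^\ba(\Omega)$.

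\textbf{Compactness.} Let $\mathbf{B}\coloneqq\prod_{\omega\in\Omega^*}[-b_\omega,b_\omega]$; as $\Omega^*$ is countable, $\mathbf{B}$ is compact and metrizable, and $\Phi\colon\mathbf{B}\to C^\ba(\Omega)$, $(c_\omega)\mapsto\sum_\omega c_\omega h_\omega$, is well defined by the previous step, with $\cH_\bb=\Phi(\mathbf{B})$. It therefore suffices to prove $\Phi$ continuous, i.e.\ (both spaces being metrizable) sequentially continuous. If $c^{(j)}\to c$ coordinatewise, set $e^{(j)}_\omega\coloneqq c^{(j)}_\omega-c_\omega$, so $|e^{(j)}_\omega|\le2b_\omega$ and $e^{(j)}_\omega\to0$ for each $\omega$, and split $\Phi(c^{(j)})-\Phi(c)=\sum_\omega e^{(j)}_\omega h_\omega$ into the part with $|\omega|<N$ and the part with $|\omega|\ge N$. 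Rerunning the previous estimates on the second part, but now retaining the vanishing factor $\epsilon^*_N\coloneqq\sup_{k\ge N}\bar{b}_k/a_k$ (finite and tending to $0$ precisely because $\bar{b}_n=o(a_n)$), bounds its Lipschitz norm by $C\epsilon^*_N$ with $C$ independent of $j$; the first part is a finite combination of the fixed $d_\ba$-Lipschitz functions $h_\omega$, $|\omega|<N$, so its Lipschitz norm is at most $\sum_{|\omega|<N}|e^{(j)}_\omega|\,\|h_\omega\|_{\Lip}\to0$ as $j\to\infty$. Choosing $N$ large and then $j$ large makes $\|\Phi(c^{(j)})-\Phi(c)\|_{\Lip}$ arbitrarily small; hence $\Phi$ is continuous and $\cH_\bb$ is compact.

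I expect the main obstacle to be the Lipschitz estimate — concretely, realizing that one must fix the pair $(x,y)$ before summing over Haar blocks, so that the disjointness of equal-level cylinders makes block $k$ contribute merely $\bar{b}_k$. A term-by-term bound is hopeless: $\sum_\omega\|h_\omega\|_{\Lip}$ diverges, and even $\sum_n\|S_n\|_{\Lip}$ need not converge, because $\|S_n\|_{\Lip}$ is of order $\bar{b}_n/a_n$, which is only $o(1)$. The same reorganization, fed by the ``little-$o$'' half of the admissibility condition, is what yields the uniform tail bound; the other half, $\log(a_n/\underline{b}_n)=O(n)$, plays no role here.
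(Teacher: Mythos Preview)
Your proof is correct and follows essentially the same route as the paper: the core estimate $|f(x)-f(y)|\le\sum_{k\ge m}\bar{b}_k$ obtained by fixing the pair and summing over levels is exactly the paper's Proposition~\ref{p.Haar_convergence}, and your exhibition of $\cH_\bb$ as the continuous image of $\prod_\omega[-b_\omega,b_\omega]$ is the content of Proposition~\ref{p.compact}. The only organizational difference is that the paper isolates the single hypothesis $\sum_{k\ge n}\bar{b}_k=o(a_n)$ and then checks it follows from evanescence plus admissibility, whereas you verify the needed inequalities inline and spell out the continuity argument via sequences; the mathematics is the same, and your closing remark that the second admissibility condition plays no role here matches the paper's own observation.
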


The set of functions $\cH_\bb$ as in the lemma above is called the \emph{Hilbert brick} with gauge $\bb$.
By means of the Haar coefficients, it can be identified with the product space $\prod_{\omega \in \Omega^*} [-b_\omega, b_\omega]$.
We endow each interval $[-b_\omega, b_\omega]$ with the normalized Lebesgue measure,
take the product probability, and using the identification obtain a probability measure
$\P_\bb$ on the Banach space $\cC^\ba(\Omega)$ supported on the compact subset $\cH_\bb$.

The  main technical result of this paper is the following:

\begin{theorem}\label{t.brick}
Suppose that $\ba$ is an evanescent sequence
and that $\bb$ is an admissible gauge with respect to $\ba$.
Then for any $f_0 \in C^\ba(\Omega)$,
$$
\P_\bb \big( \left\{ g \in \cH_\bb \st f_0 + g \in \lock \right\} \big) = 1 \, .
$$
\end{theorem}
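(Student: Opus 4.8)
The plan is to reduce the infinite-dimensional ergodic optimization problem to a finite-dimensional one by truncating the Haar series, and then to exploit the finite-dimensional structure — which, as the abstract promises, is a maximum cycle mean problem on a de~Bruijn graph — together with a Fubini argument to show that locking occurs almost surely. Write $g = \sum_{\omega} c_\omega h_\omega$ with $c_\omega \in [-b_\omega, b_\omega]$ chosen independently and uniformly, and split $g = g_{<N} + g_{\ge N}$ according to whether $|\omega| < N$ or $|\omega| \ge N$. The admissibility condition $\bar b_n = o(a_n)$ should guarantee that the tail $g_{\ge N}$ is small in $C^\ba$-norm — indeed its contribution to the Lipschitz constant at scale $a_n$ is controlled by $\sum_{n \ge N} \bar b_n / a_n$, which one needs to be summable or at least to go to zero; here the evanescence \eqref{e.eva} gives enormous room. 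So for the purposes of locking, which is an open condition in $C^\ba(\Omega)$, it suffices to find, for a set of full measure in the finitely many coordinates $\{c_\omega : |\omega| < N\}$ (for suitable $N$ depending on $f_0$), a choice that locks $f_0 + g_{<N}$ robustly enough to absorb the tail.

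The core step is therefore the finite-dimensional statement: given $f_0$ and given the tail perturbation already fixed (and tiny), the function $h \mapsto f_0 + g_{\ge N} + \sum_{|\omega| < N} c_\omega h_\omega$ has the locking property for Lebesgue-almost every $(c_\omega)_{|\omega|<N}$. To analyze this I would push the problem onto the finite alphabet: a locally constant perturbation at level $N$ together with a Lipschitz function depends, for ergodic-optimization purposes, essentially on the values of Birkhoff sums over length-$N$ words, so the maximizing measures are governed by the maximum mean cycle in the de~Bruijn graph $B_N$ whose vertices are length-$(N-1)$ words and whose edge weights are affine functions of the $c_\omega$'s (plus contributions from $f_0$ and the tail). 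For a fixed graph, the set of edge-weight vectors for which the maximum mean cycle is attained at a \emph{unique} cycle, with a strict gap to all others, is open and dense with full-measure complement — it is the complement of finitely many hyperplanes (ties between cycle means) together with lower-dimensional strata. Since the $c_\omega$ enter the edge weights through an affine, surjective-enough map (each interior Haar coefficient at levels $<N$ can be tuned independently), the preimage of this good set still has full Lebesgue measure. A unique maximizing cycle with a spectral gap for the finite model then upgrades, via a Mañé-lemma / sub-action argument as in \cite{Bousch_Walters, Contreras_EO}, to the statement that $f_0 + g_{<N}$ has a unique maximizing measure (the periodic measure on that cycle) that persists under all sufficiently small $C^\ba$-perturbations — in particular under adding $g_{\ge N}$ — i.e.\ the locking property.

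Finally I would assemble these pieces by Fubini on the product measure $\P_\bb = \big(\bigotimes_{|\omega|<N}\text{unif}\big) \otimes \big(\bigotimes_{|\omega|\ge N}\text{unif}\big)$: for $\P_\bb$-a.e.\ choice of the tail coordinates, the bound from admissibility makes $\|g_{\ge N}\|_\ba$ as small as we like once $N$ is large; and for each such tail, the finite-dimensional argument gives full measure in the head coordinates. One subtlety is that the "how small must the tail be" threshold — the size of the locking neighborhood — depends on the spectral gap of the finite model, which in turn depends on the random head coordinates, so the argument must be organized as: choose $N$ large, get a full-measure set of head coordinates on which the gap exceeds some $\delta(c) > 0$, then note that the \emph{second} admissibility condition $\log(a_n/\underline b_n) = O(n)$ together with evanescence forces the tail norm below $\delta(c)$ with probability one (perhaps after enlarging $N$ on a countable exhaustion $\delta > 1/k$). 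I expect the main obstacle to be exactly this interplay: making the finite-dimensional gap estimate quantitative enough, uniformly over the relevant range of coordinates, that it beats the (extremely small, thanks to evanescence) tail — and verifying that the affine map from interior Haar coefficients to de~Bruijn edge weights is nondegenerate in the directions that separate competing cycles, so that "full measure of good edge weights" genuinely pulls back to "full measure of good coefficients."
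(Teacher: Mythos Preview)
Your architecture is right: truncate the Haar series, recognize the finite-dimensional problem as maximum cycle mean on a de~Bruijn graph, and argue that ties form a measure-zero set of hyperplanes. You also correctly identify the main obstacle, namely that the locking radius depends on the (random) spectral gap. But your proposed resolution of that obstacle does not work, and this is where the actual content of the proof lies.

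The countable-exhaustion / Fubini-on-tail idea is circular. For a fixed level $N$, the complement of the tie hyperplanes has full measure, so the gap $\gamma_N(g)$ is a.e.\ positive --- but that says nothing about whether $\gamma_N(g)$ exceeds the (deterministic) tail bound $\delta_N$. Your suggestion ``enlarge $N$ on an exhaustion $\delta>1/k$'' does not help: when you increase $N$, both the gap and the tail change, the polytope $R_N$ acquires many more vertices and edges, and you have no a~priori lower bound on $\gamma_N$. What is actually needed is the quantitative estimate $\P_\bb(\gamma_N \le \delta_N) \to 0$, and this requires counting: the number of competing vertex pairs (edges of $R_N$) is at most $2^{2^N}$; for each pair $\mu,\nu$ the linear functional $I_\mu - I_\nu$ has $\|\cdot\|_\infty \ge 2^{-N}$ (because the two cycles differ on some node of $G_N$); and a slicing lemma bounds each slab's measure by $2\delta_N /(\|I_\mu-I_\nu\|_\infty \cdot \underline{b}_{N-1})$. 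The product $2^{2^N}\cdot 2^N \cdot \delta_N / \underline{b}_{N-1}$ tends to zero \emph{precisely} under the evanescence condition \eqref{e.eva} combined with the second admissibility condition $\log(a_n/\underline{b}_n)=O(n)$ --- which is where that condition is actually used, not to control the tail norm as you suggest. Without this counting you cannot close the argument; the doubly-exponential growth of the edge count is the whole reason evanescence is so stringent.

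A second, smaller gap: you invoke a Ma\~n\'e-lemma / sub-action argument to pass from ``unique cycle with gap for the truncation'' to ``locking for the full $f$''. The paper instead proves an explicit \emph{gap criterion} (Lemma~\ref{l.gap}): if $\gap_n(A_n f) > \sum_{k\ge n}(k-n+1)\max_{|\omega|=k}|c_\omega(f)|$ then $f$ is locked. This is established by a direct combinatorial argument on basins of repellency, and the explicit form of the threshold is what makes the quantitative comparison above possible. A generic sub-action argument would give \emph{some} locking radius, but not one you could compare to $\delta_N$ in closed form.
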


The conclusion of the theorem is that
the measure $\P_\bb$ is transverse of the complement of the set $\lock$.
In particular $\lock$ is prevalent.
So Theorem~\ref{t.brick} implies Theorem~\ref{t.prevalence}.

\begin{remark}\label{r.decay}
Based on experimental evidence, Hunt and Ott \cite{HO} also conjecture that
for typical parameterized families of functions, the Lebesgue measure of the parameters
corresponding to maximizing measures with period $p$ or greater decreases exponentially
with $p$.
Using our methods it is straightforward to construct
spaces of Lipschitz functions and
measures on Hilbert bricks
(which can be regarded as families on infinitely many parameters)
such that the measure of the parameters
corresponding to maximizing measures with period $p$ or greater
decays faster than a given function of $p$.
Such result does not seem especially interesting, so we will not provide details.
\end{remark}

\subsection{Main ideas of the proof of Theorem~\ref{t.brick} and organization of the paper}

Due to high regularity, the functions
considered in this paper are well-approximated by step functions.
Equivalently, their Haar coefficients decay fast. The strategy of
the proof is to show that with high probability the maximizing measure
for the function $f$ is the same as for a step function obtained by
a truncation of the Haar series.

\smallskip

The paper is organized as follows: In Section~\ref{s.Haar_Hilbert}
we prove a few simple facts about Haar series and the compactness of
Hilbert bricks (Lemma~\ref{l.intro}). In Section~\ref{s.rot} we
study in detail some finite-dimensional projections of the set of
invariant measures (rotation sets). We describe these sets in
graph-theoretical terms. This information is basic to the arguments
that follow. In Section~\ref{s.gap} we obtain a sufficient
\emph{gap condition} for the locking property;
it is basically a transversality condition in terms of finitely many Haar coefficients.
In Section~\ref{s.proof} we use evanescence
\eqref{e.eva} and admissibility \eqref{e.adm} to show that with
probability $1$ the transversality condition will be satisfied at
some level, thus proving Theorem~\ref{t.brick}. The final
Section~\ref{s.questions} contains a few open questions and discusses
the obstacles to improving the evanescence condition.

\section{More about Haar series and Hilbert bricks}\label{s.Haar_Hilbert}

In this section we collect a few facts about Haar series and we prove
Lemma~\ref{l.intro}.

\smallskip

We fix some notation.
Let $x = (x_0, x_1, \dots)$ and $y = (y_0, y_1, \dots)$ be two elements of $\Omega = \{0,1\}^\N$.
Let $x \disagree y$ denote the position of first disagreement between the sequences $x$ and $y$, that is, the least $m \in \N$ such that $x_m \neq y_m$, with the convention $x \disagree x = \infty$.
The following properties hold:
$$
x \disagree y = y \disagree x \quad \text{(symmetry),} \qquad
x \disagree z \ge \min(x \disagree y, y \disagree z) \quad \text{(ultrametric property).}
$$
For each $n \in \N$,
we define the \emph{$n$-th variation} of a function $f \colon \Omega \to \R$ as:
\begin{equation}\label{e.var}
\var_n(f) \coloneqq \sup_{x\disagree  y \ge n} \big|f(x)-f(y) \big| \, .
\end{equation}
If the function $f$ satisfies $\var_n(f) = 0$, i.e., it is constant in each cylinder of level $n$,
then it is called a \emph{step functions of level $n$}.
Such functions form a vector space $\cS_n$ of dimension $2^n$.

Recall the definition \eqref{e.Haar_function} of the Haar functions $h_\omega$, where $\omega \in \Omega^*$.
For each $n \in \N$, the set $\{1\} \cup \{h_\omega \st |\omega|<n\}$ forms
a basis of the vector space $\cS_n$.

\smallskip

Let $\beta$ be the unbiased Bernoulli measure on $\Omega$,
i.e.\ the probability measure that assigns equal weights to all
cylinders of the same level. Let $L^2(\beta)$ denote the Hilbert
space of functions that are square-integrable with respect to the
measure $\beta$. Then the set $\{1\} \cup \{h_\omega \st \omega \in
\Omega^*\}$ is an orthogonal basis of $L^2(\beta)$. Thus every $f
\in L^2(\beta)$ can be represented by a \emph{Haar series}:
$$
f = c(f) + \sum_{\omega \in \Omega^*} c_\omega(f) h_\omega  \qquad \text{(equality in $L^2(\beta)$),}
$$
where the \emph{Haar coefficients} are defined as:
\begin{equation}\label{e.Haar_coefficients}
c(f) \coloneqq \mediumint f \dd{\beta} \, , \qquad
c_\omega(f) \coloneqq 2^{|\omega|+2} \mediumint f \, h_\omega \dd{\beta} \, ,
\end{equation}
The \emph{$n$-th approximation} of $f \in L^2(\beta)$
is defined in any of the following equivalent ways:
\begin{itemize}
\item the projection of $f$ on the subspace $\cS_n$ along its orthogonal complement;
\item the sum of the truncated Haar series:
\begin{equation}\label{e.A_truncated}
A_n f \coloneqq c(f) + \sum_{|\omega|<n} c_\omega(f) h_\omega \, ;
\end{equation}
\item the function obtained by averaging $f$ on cylinders of level $n$:
\begin{equation}\label{e.A_average}
A_n f = \sum_{|\omega| = n} \left( 2^n \mediumint_{[\omega]} f \dd\beta \right) \chi_{\omega} \, .
\end{equation}
\end{itemize}
As a consequence of the last characterization we have, in terms of notation \eqref{e.var},
$$
\|f - A_n f\|_\infty \le \var_n(f) \, ,
$$
which converges to $0$ as $n \to \infty$ if $f$ is continuous.
Therefore every continuous function $f$ can be written as a uniformly convergent series:
$$
f = c(f) + \sum_{n=0}^\infty \left( \sum_{\substack{\omega \in \Omega^* \\ |\omega| = n}} c_\omega(f) h_\omega  \right),
$$
which, with some abuse of language, we also call \emph{Haar series}.

\smallskip

Given a sequence of coefficients, when do they form the Haar coefficients of a continuous function?
Note that, as an immediate consequence of definition \eqref{e.Haar_coefficients},
\begin{equation}\label{e.var_to_Haar}
|c_\omega (f)| \le \var_{|\omega|}(f) \, ;
\end{equation}
therefore a necessary condition for the continuity of a function is that its Haar coefficients tend to $0$.
The next proposition gives a sufficient condition:

\begin{proposition}\label{p.Haar_convergence}
Given a family of numbers $(c_\omega)_{\omega\in \Omega^*}$, denote
$\bar{c}_n \coloneqq \sup_{|\omega|=n} |c_\omega|$.
If $\sum \bar{c}_n < \infty$ then
the Haar series
$$
c + \sum_{n=0}^\infty \left( \sum_{|\omega| = n} c_\omega h_\omega  \right),
$$
is normally convergent and defines a continuous function $f$ such that
\begin{align}
\| f \|_\infty &\le |c| + \frac{1}{2}\sum_n \bar{c}_n \, ,
\label{e.Haar_to_sup_norm}
\\
\var_n(f) &\le \sum_{k\ge n} \bar{c}_k \quad \forall n \in \N.
\label{e.Haar_to_var}
\end{align}
\end{proposition}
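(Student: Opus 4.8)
The plan is to organize the Haar series by levels and recognize it as a normally convergent series of step functions in the Banach space $C^0(\Omega)$, so that the Weierstrass $M$-test applies directly.

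First I would set, for each $n \in \N$, $g_n \coloneqq \sum_{|\omega| = n} c_\omega h_\omega$. As $\omega$ runs over the $2^n$ words of length $n$, the supports $[\omega 0]\cup[\omega 1] = [\omega]$ are pairwise disjoint, so at every point of $\Omega$ at most one summand of $g_n$ is nonzero; since $\|h_\omega\|_\infty = \tfrac12$, this yields the crucial per-level bound $\|g_n\|_\infty \le \tfrac12 \bar c_n$. Moreover $g_n$ is a linear combination of the basis elements $h_\omega$, $|\omega| = n$, of $\cS_{n+1}$, hence a step function of level $n+1$, in particular continuous. Since $\sum_n \|g_n\|_\infty \le \tfrac12 \sum_n \bar c_n < \infty$, the series $c + \sum_n g_n$ converges normally in $C^0(\Omega)$ and defines a continuous function $f$ with $\|f\|_\infty \le |c| + \tfrac12 \sum_n \bar c_n$, which is \eqref{e.Haar_to_sup_norm}. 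Uniform convergence permits term-by-term integration, and the orthogonality of the $h_\omega$ then confirms that $c$ and the $c_\omega$ are genuinely the Haar coefficients of $f$ in the sense of \eqref{e.Haar_coefficients}, so that the name ``Haar series'' is justified.

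Next I turn to the variation estimate \eqref{e.Haar_to_var}. Fixing $n$ and choosing $x,y\in\Omega$ with $x \disagree y \ge n$, so that $x$ and $y$ lie in a common level-$n$ cylinder, I note that for every $k < n$ the function $g_k$ belongs to $\cS_{k+1}\subseteq\cS_n$ and is therefore constant on that cylinder, so $g_k(x) = g_k(y)$; hence, by pointwise convergence, $f(x) - f(y) = \sum_{k\ge n}\big(g_k(x) - g_k(y)\big)$, and consequently $|f(x)-f(y)| \le 2\sum_{k\ge n}\|g_k\|_\infty \le \sum_{k\ge n}\bar c_k$. Taking the supremum over all such pairs gives $\var_n(f) \le \sum_{k\ge n}\bar c_k$, as desired. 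I do not expect any genuine obstacle: this is a textbook $M$-test argument, and the only points needing a moment's care are the disjointness of supports within a single level — which is what allows a sum of $|c_\omega|$ to be replaced by the maximum $\bar c_n$ — and the nesting $\cS_{k+1}\subseteq\cS_n$ for $k<n$, which annihilates the low-frequency terms in the variation bound; both are immediate from the definitions already recalled.
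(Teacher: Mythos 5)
Your proposal is correct and follows essentially the same route as the paper: organize the series by levels, use disjointness of supports within each level to get $\|g_n\|_\infty \le \tfrac12\bar c_n$ (so the Weierstrass $M$-test gives normal convergence and \eqref{e.Haar_to_sup_norm}), and observe that terms of level $k<n$ contribute nothing to $f(x)-f(y)$ when $x\disagree y\ge n$. The only cosmetic difference is that the paper counts the nonzero terms $|h_\omega(x)-h_\omega(y)|$ at each level explicitly (one term of value $1$ at $k=n$, two of value $\tfrac12$ at $k>n$), whereas you package this as the bound $|g_k(x)-g_k(y)|\le 2\|g_k\|_\infty\le\bar c_k$; the resulting estimate is identical.
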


\begin{proof}
Note that if $\omega$ and $\omega'$ are two different
words of same length then the corresponding Haar functions
$h_\omega$, $h_{\omega'}$ have disjoint support. Using this fact,
the normal convergence claim and estimate \eqref{e.Haar_to_sup_norm}
follow. Let us proceed to prove \eqref{e.Haar_to_var}. Fix distinct
points $x$, $y \in \Omega$ and estimate
$$
|f(x) - f(y)| \le \sum_{k=0}^\infty \bar{c}_k \sum_{|\omega|=k}  |h_\omega(x)-h_\omega(y)| \, .
$$
Let $n \coloneqq x \disagree y$ (the position of first disagreement between $x$ and $y$).
Note that:
\begin{itemize}
    \item if $k < n$ then all terms in the inner sum are $0$;
    \item if $k = n$ then the inner sum contains exactly one nonzero term, namely the one corresponding to $\omega = x_0 \cdots x_{n-1} = y_0 \dots y_{n-1}$ (which by convention is the empty word when $n=0$), whose value is $1$;
    \item if $k > n$ then the inner sum contains exactly two nonzero terms, namely the ones corresponding to $\omega = x_0 \cdots x_{n-1}$ and $\omega = y_0 \dots y_{n-1}$, both of which have value  $1/2$.
\end{itemize}
Therefore we obtain that $|f(x) - f(y)| \le \sum_{k \ge n} \bar{c}_k$,
completing the proof of \eqref{e.Haar_to_var}.
\end{proof}

\smallskip

Now let us specialize to the Banach spaces $C^\ba(\Omega)$ considered on \S\ref{ss.precise}.
Fix a sequence $\ba = (a_n) \searrow 0$.
Note that the least Lipchitz constant $\Lip_\ba(\mathord{\cdot})$ with respect to the metric $d_\ba$
can be expressed in terms of the variations~\eqref{e.var} as
\begin{equation}\label{e.var_to_Lip}
\Lip_\ba (f) = \sup_{n \in \N} \frac{\var_n(f)}{a_n} \, .
\end{equation}
By \eqref{e.var_to_Haar},
the Haar coefficients of any $f \in C^\ba(\Omega)$ satisfy the bounds:
\begin{equation}\label{e.Lip_to_Haar}
|c_\omega(f)| \le a_n \Lip_\ba(f) \quad \text{for all $\omega \in \Omega^*$ with $|\omega|=n$.}
\end{equation}
Conversely, if the coefficients $(c_\omega)$ in Proposition~\ref{p.Haar_convergence}
satisfy $\sum_{k \ge n} \bar{c}_k = O(a_n)$ then they are the Haar coefficients of
a function $f \in C^\ba(\Omega)$.

\smallskip

Given a gauge $\bb = (b_\omega)_{\omega \in \Omega^*}$
(i.e., a family of positive numbers indexed by words) such that
\begin{equation}\label{e.b_conditions}
\sum \bar{b}_n < \infty \, , \quad \text{where} \quad
\bar{b}_n \coloneqq \sup_{|\omega| = n} b_\omega \, ,
\end{equation}
the \emph{Hilbert brick} with gauge $\bb$ is:
\begin{equation}
\cH_\bb \coloneqq \left\{ \sum_{\omega \in \Omega^*} c_\omega h_\omega \st |c_\omega| \le b_\omega \right\} \, .
\end{equation}
Then $\cH_\bb$ is a well-defined set of continuous functions,
as an immediate application of Proposition~\ref{p.Haar_convergence}.
By the observations above, if
$\sum_{k \ge n} \bar{b}_k = O(a_n)$ then
$\cH_\bb \subset C^\ba(\Omega)$.
We have the following compactness criterion:

\begin{proposition}\label{p.compact}
If $\sum_{k \ge n} \bar{b}_k = o(a_n)$ then
the Hilbert brick $\cH_\bb$ is a compact subset of the Banach space $C^\ba(\Omega)$.
\end{proposition}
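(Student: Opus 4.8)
The plan is to exhibit $\cH_\bb$ as a continuous image of a compact metrizable space, namely the cube of admissible Haar coefficients; recall that the norm on $C^\ba(\Omega)$ is $\|f\|_\infty + \Lip_\ba(f)$, with $\Lip_\ba(f) = \sup_n \var_n(f)/a_n$ by \eqref{e.var_to_Lip}. I would set $K \coloneqq \prod_{\omega \in \Omega^*}[-b_\omega, b_\omega]$; since $\Omega^*$ is countable, $K$ is compact (by Tychonoff's theorem) and metrizable. The hypothesis $\sum_{k\ge n}\bar b_k = o(a_n)$ implies in particular $\sum_n \bar b_n < \infty$, so by Proposition~\ref{p.Haar_convergence} the map $\Phi \colon K \to C^0(\Omega)$, $\Phi\big((c_\omega)_\omega\big) \coloneqq \sum_\omega c_\omega h_\omega$, is well defined with $\Phi(K) = \cH_\bb$; and since $\sum_{k\ge n}\bar b_k = O(a_n)$ we already know $\cH_\bb \subset C^\ba(\Omega)$. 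So it suffices to prove that $\Phi$ is continuous into $C^\ba(\Omega)$, for then $\cH_\bb = \Phi(K)$ is compact, being the continuous image of a compact set.

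As $K$ is metrizable, I would check sequential continuity. Let $c^{(j)} \to c$ in $K$ and put $d^{(j)}_\omega \coloneqq c^{(j)}_\omega - c_\omega$, so $|d^{(j)}_\omega| \le 2 b_\omega$ for all $\omega$ and $d^{(j)}_\omega \to 0$ for each fixed $\omega$. Fix $\epsilon > 0$; using $\sum_{k\ge n}\bar b_k = o(a_n)$, choose $N \ge 1$ with $\sum_{k\ge n}\bar b_k \le \epsilon\, a_n$ for all $n \ge N$ and also $\sum_{k\ge N}\bar b_k \le \epsilon$. Decompose $\Phi(c^{(j)}) - \Phi(c) = g'_j + g''_j$ into a \emph{head} $g'_j \coloneqq \sum_{|\omega|<N} d^{(j)}_\omega h_\omega \in \cS_N$ and a \emph{tail} $g''_j \coloneqq \sum_{|\omega|\ge N} d^{(j)}_\omega h_\omega$. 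For the tail, whose level-$k$ Haar coefficients are bounded by $2\bar b_k$ when $k \ge N$ and vanish when $k < N$, Proposition~\ref{p.Haar_convergence} gives $\|g''_j\|_\infty \le \sum_{k\ge N}\bar b_k \le \epsilon$ and $\var_n(g''_j) \le 2\sum_{k\ge\max(n,N)}\bar b_k$ for every $n$; dividing by $a_n$ and using that $\ba$ is decreasing yields $\var_n(g''_j)/a_n \le 2\epsilon$ for all $n$, hence $\Lip_\ba(g''_j) \le 2\epsilon$ and $\|g''_j\|_\infty + \Lip_\ba(g''_j) \le 3\epsilon$, with bounds uniform in $j$. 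For the head, $g'_j$ is a step function of level $N$ with only the finitely many Haar coefficients $d^{(j)}_\omega$, $|\omega| < N$, each of which tends to $0$ as $j \to \infty$; since $\var_n(g'_j) = 0$ for $n \ge N$ and $a_n$ is a fixed positive constant for the finitely many $n < N$, both $\|g'_j\|_\infty \to 0$ and $\Lip_\ba(g'_j) \to 0$. By subadditivity of $\|\mathord{\cdot}\|_\infty$ and of $\Lip_\ba$, it follows that $\limsup_{j\to\infty}\big(\|\Phi(c^{(j)}) - \Phi(c)\|_\infty + \Lip_\ba(\Phi(c^{(j)}) - \Phi(c))\big) \le 3\epsilon$; as $\epsilon$ was arbitrary, $\Phi(c^{(j)}) \to \Phi(c)$ in $C^\ba(\Omega)$, completing the argument.

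I expect the uniform-in-$j$ smallness of the tail $g''_j$ in the Lipschitz seminorm to be the only delicate point, and it is exactly here that the strengthening from the inclusion criterion $\sum_{k\ge n}\bar b_k = O(a_n)$ to the compactness criterion $\sum_{k\ge n}\bar b_k = o(a_n)$ is needed: with only an $O(a_n)$ bound the truncation tails are uniformly bounded but not uniformly small, and $\cH_\bb$ can then fail to be compact (for instance, when $b_\omega$ depends only on $|\omega|$ and $\sum_{k\ge n}\bar b_k \asymp a_n$, the rescaled single Haar spikes $b_\omega h_\omega$ at a fixed level $n$ form $2^n$ points pairwise $\gtrsim 1$ apart in $C^\ba(\Omega)$). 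The little extra decay in $o(a_n)$ is precisely what allows the truncation level $N$ to be fixed once and for all so that $\sup_n \var_n(g''_j)/a_n$ is arbitrarily small; all the rest is bookkeeping with Proposition~\ref{p.Haar_convergence} plus the trivial observation that truncating to level $N$ lands in the finite-dimensional space $\cS_N$.
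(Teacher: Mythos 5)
Your proof is correct and follows exactly the paper's approach: realize $\cH_\bb$ as the image of the compact product cube $\prod_\omega[-b_\omega,b_\omega]$ under the Haar-series map and show continuity of that map into $C^\ba(\Omega)$ using \eqref{e.Haar_to_sup_norm}, \eqref{e.Haar_to_var}, and \eqref{e.var_to_Lip}. The paper merely states ``one checks that the map is continuous''; your head/tail decomposition and uniform tail estimate supply precisely the details it omits.
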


\begin{proof}
The product topology on $Q \coloneqq \prod_{\omega \in \Omega^*} [-b_\omega,b_\omega]$ makes it a compact space.
Using properties \eqref{e.Haar_to_sup_norm}, \eqref{e.Haar_to_var}, and \eqref{e.var_to_Lip}, together with the assumption on the series $\sum \bar{b}_k$, one checks that the map
$$
(c_\omega)_{\omega\in\Omega^*} \in Q \mapsto \sum_{\omega \in \Omega^*} c_\omega h_\omega \in C^\ba(\Omega)
$$
is continuous.
Therefore its image $\cH_\bb$ is compact.
\end{proof}

\begin{proof}[Proof of Lemma~\ref{l.intro}]
If $\ba = (a_n)$ is an evanescent sequence then $\sum_{k \ge n} a_k = O(a_n)$.
So if $\bb = (b_\omega)$ is an admissible gauge with respect to $\ba$
then $\sum_{k \ge n} \bar{b}_k = o(a_n)$.
Applying Proposition~\ref{p.compact} we conclude that $\cH_\bb$
is a compact subset of $C^\ba(\Omega)$.
\end{proof}

As the reader will have noticed, the second admissibility condition
in \eqref{e.adm} is not needed for the validity of
Lemma~\ref{l.intro}. It will be fundamental for the proof of
Theorem~\ref{t.brick}, however.

\section{Finite-dimensional ergodic optimization}\label{s.rot}


\subsection{Rotation sets}

As in \S\ref{ss.precise}, consider a general continuous transformation $T$
of a compact metric space $X$.
The set $\cM_T$ of invariant probabilities
is a compact convex subset of the space $\cM_{\pm}(X)$
of all finite signed Borel measures, endowed with the usual weak topology.

Let $F \subset C^0(X)$ be a finite-dimensional space of continuous functions.
Given a basis $\{f_1, \dots, f_n\}$ of $F$,
consider the  continuous linear map
\begin{equation}\label{e.abstract_projection}
\pi \colon \cM_\pm(X) \to \R^n
\quad \text{defined by} \quad
\pi(\mu) = \big( \langle f_1, \mu\rangle, \dots, \langle f_n, \mu\rangle \big) \, .
\end{equation}
A change of basis produces a conjugate map $\pi$.
The image of $\cM_T$ under $\pi$ is a convex compact subset of $\R^n$,
called a \emph{generalized rotation set}, or simply a \emph{rotation set}.
The terminology is not especially good, but since it seems to be established in the literature
\cite{Ziemian,Jenkinson_rotation,KW} we will conform with it.
The prime example is when $(f_1,\dots,f_n)$ is the displacement vector of a map of the $n$-torus homotopic to the identity.
An example which is much more related to the theme of this paper is
the Bousch's ``fish'' \cite{Bousch_poisson}, where the dynamics is the doubling map
$T(\theta) = 2\theta$ on the circle $\R/2\pi\Z$
and $F$ is spanned by the functions $\cos \theta$ and $\sin \theta$,
so the projection $\pi$ consists on computing two Fourier coefficients.

It is shown in \cite{KW} that arbitrary compact convex sets
in $\R^n$ appear as rotation sets of shift transformations.
On the other hand, a situation where rotation sets are relatively simple was uncovered by Ziemian~\cite{Ziemian}: if $T$ is a transitive subshift of finite type
and $F$ is composed by step functions constant on cylinders of level $2$
then the associated rotation set is a polytope.

\smallskip

Let us say that a rotation set $R = \pi(\cM_T)$  has the \emph{injectivity property}
if for each extreme point $\rho$ of $R$
there exists an unique $\mu \in \cM_T$ such that $\pi(\mu) = \rho$.
Examples are given by Ziemian's polytopes
(where the preimages of extreme points are certain periodic measures)
and Bousch's fish
(where the preimages are the Sturmian measures).

\smallskip

Let us discuss the relevance of rotation sets in ergodic optimization.
The map $\pi$ has the following obvious \emph{projection property}:
for all $\mu$, $\tilde{\mu} \in \cM_{\pm}(X)$,
\begin{equation}\label{e.projection_property}
\pi(\mu) = \pi(\tilde\mu) \quad \Leftrightarrow \quad
\langle f , \mu \rangle = \langle f , \tilde{\mu} \rangle
\text{ for all } f \in F \, .
\end{equation}
Moreover, $\pi$ is onto $\R^n$ (since the functions $f_1$, \dots, $f_n$ are linearly independent).
Using these facts,
we can define a bilinear map
$\llangle \mathord{\cdot} , \mathord{\cdot} \rrangle \colon F \times \R^n \to \R$,
by
\begin{equation}\label{e.new_inner_product}
\llangle f , v \rrangle \coloneqq \langle f , \mu \rangle
\quad \text{where $\mu$ is such that } \pi(\mu) = v.
\end{equation}
In particular, the ergodic supremum \eqref{e.ergsup}
of a function $f \in F$ can be expressed as:
\begin{equation}\label{e.finite_dim_opt}
\ergsup f =
\sup_{v \in R} \llangle f , v \rrangle \, .
\end{equation}
Therefore $R$ and $\llangle \mathord{\cdot} , \mathord{\cdot}
\rrangle$ encode basically all the information necessary to do
ergodic optimization for functions in $F$, which becomes a
finite-dimensional problem. In the particular case that $R$ is a
polytope, the computation of $\ergsup f$ becomes a \emph{linear
programming} problem. If the supremum in \eqref{e.finite_dim_opt} is
attained at a unique point, which is often the case, and $R$ happens
to have the injectivity property, then the original ergodic
optimization problem \eqref{e.ergsup} has a unique solution,
i.e., the function $f \in F$ has a unique maximizing measure.

\subsection{A family of rotation sets for the shift}\label{s.rot_for_shift}

Let us focus on the shift transformation $\sigma$ on $\Omega = \{0,1\}^\N$.
For each $n \in \N$, consider the vector space $\cS_n$ of step functions of level $n$,
which has dimension $2^n$.
The characteristic functions of cylinders of level $n$, ordered lexicographically,
form a basis of $\cS_n$.
Let $\pi_n \colon \cM_{\pm}(\Omega) \to \R^{2^n}$ be the corresponding projection map,
as defined in the previous subsection, that is,
\begin{align*}
\pi_0(\mu) &\coloneqq \big( \mu(\Omega) \big) , \\
\pi_1(\mu) &\coloneqq \big( \mu([0]), \mu([1]) \big) , \\
\pi_2(\mu) &\coloneqq \big( \mu([00]), \mu([01]), \mu([10]), \mu([11]) \big) , \text{ etc.}
\end{align*}
If $\mu$ is a probability measure then $\pi_n(\mu)$ is a subset of the unit simplex $\Delta^{2^n-1}$.
We define a rotation set
$$
R_n \coloneqq \pi_n(\cM_\sigma) \, .
$$

\begin{proposition}\label{p.ziemian}
The rotation sets $R_n$ are polytopes, and satisfy the injectivity property.
\end{proposition}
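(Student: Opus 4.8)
The plan is to reinterpret $R_n$ via the de~Bruijn graph $G_n$, whose vertices are the words of length $n-1$ and whose edges are the words of length $n$, the edge $\omega = \omega_0\cdots\omega_{n-1}$ running from the vertex $\omega_0\cdots\omega_{n-2}$ to the vertex $\omega_1\cdots\omega_{n-1}$. (For $n=0$ we have $R_0=\{1\}$ and there is nothing to do, so assume $n\ge 1$.) First I would record the constraints satisfied by $p \coloneqq \pi_n(\mu)$ for $\mu\in\cM_\sigma$: the vector $p=(\mu([\omega]))_{|\omega|=n}$ is nonnegative, has coordinate sum $1$, and satisfies, for each word $\tau$ of length $n-1$, the identity $\sum_{b} \mu([\tau b]) = \mu([\tau]) = \sum_{b} \mu([b\tau])$ — the first equality by finite additivity, the second by $\sigma$-invariance applied to $\sigma^{-1}[\tau]=[0\tau]\sqcup[1\tau]$. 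In graph language this says exactly that $p$ is a \emph{circulation} on $G_n$ (flow into each vertex equals flow out). Thus $R_n\subseteq P_n$, where $P_n\subset\R^{2^n}$ denotes the polytope of probability circulations on $G_n$.

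For the reverse inclusion I would invoke the classical flow-decomposition theorem: every nonnegative circulation on a finite directed graph is a nonnegative combination of indicator vectors of simple directed cycles. A simple directed cycle $C$ of length $\ell$ in $G_n$ spells out, by reading consecutive letters along its edge-words, a periodic point of $\sigma$ of minimal period $\ell$ (minimality because the $\ell$ vertices of $C$ are distinct); the uniform measure $\mu_C$ on its orbit lies in $\cM_\sigma$ and satisfies $\pi_n(\mu_C)=\tfrac1\ell\mathbf 1_{E(C)}$, since the $\ell$ windows of length $n$ seen along the orbit are precisely the $\ell$ (distinct) edges of $C$. Hence any $p\in P_n$ is a convex combination of the finitely many vectors $\pi_n(\mu_C)$ — the coefficients are nonnegative and sum to $\sum_\omega p_\omega=1$ — so $p\in R_n$. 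This yields $R_n = P_n = \operatorname{conv}\{\pi_n(\mu_C) : C\text{ a simple cycle of }G_n\}$, which is in particular a polytope.

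For the injectivity property, observe that an extreme point $\rho$ of $R_n$ must belong to the generating set, i.e.\ $\rho=\pi_n(\mu_C)=\tfrac1\ell\mathbf 1_{E(C)}$ for some simple cycle $C$; moreover $C$ is recovered from $\rho$ as the subgraph with edge set $\{\omega:\rho_\omega>0\}$, so this $C$ is unique. It remains to show $\mu_C$ is the only $\mu\in\cM_\sigma$ with $\pi_n(\mu)=\rho$. If $\pi_n(\mu)=\rho$, then $\mu([\omega])=0$ for every length-$n$ word $\omega\notin E(C)$; applying $\sigma$-invariance, $\mu(\sigma^{-k}[\omega])=0$ for all $k$, and taking a countable union we conclude that $\mu$ is concentrated on the set $Y$ of $x\in\Omega$ all of whose length-$n$ windows are edges of $C$. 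The key point is that $Y$ is exactly the $\ell$-point periodic orbit determined by $C$: in a simple cycle every vertex has a unique outgoing edge inside $C$, so knowing the initial vertex $x_0\cdots x_{n-2}$ forces the entire sequence $x$ to cycle around $C$. Therefore $\mu$ is a $\sigma$-invariant probability measure supported on a single cyclic orbit of $\ell$ points, hence the uniform measure $\mu_C$.

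The one place demanding care, rather than being routine bookkeeping, is the dictionary between $G_n$ and the dynamics: verifying that a simple cycle of length $\ell$ gives a genuinely periodic point of \emph{minimal} period $\ell$ (so that the normalization $\tfrac1\ell\mathbf 1_{E(C)}$ is correct), and, for the injectivity step, the ``forced continuation'' property of simple cycles that collapses the support of a candidate measure onto one orbit. The rest — the invariance constraints, the flow decomposition, and recognizing $P_n$ as a polytope — is standard; indeed the statement that $R_n$ is a polytope is essentially Ziemian's theorem \cite{Ziemian} specialized to the full shift.
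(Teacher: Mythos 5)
Your proof is correct, and the high-level strategy — reinterpret $\pi_n(\cM_\sigma)$ as the probability-circulation polytope of the de~Bruijn graph $G_n$, decompose circulations into cycle circulations, and read off periodic measures from simple cycles — is exactly what the paper does. The one real point of divergence is the tool used to get the cycle decomposition and the vertex characterization. You invoke the classical flow-decomposition theorem as a black box and then observe that extreme points of a polytope $\operatorname{conv}(S)$ with $S$ finite lie in $S$. The paper instead proves a standalone polyhedral lemma (its Proposition on the faces of a circulation polytope): starting from the description of the circulation polytope by linear equalities and inequalities, it identifies \emph{all} faces with circulation polytopes of subgraphs, and vertices with circulations supported on cycles. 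This is more work than you need for the Proposition at hand, but the paper uses the full face structure later (to bound the number of edges of $R_n$ by $2^{2^n}$ in the proof of the main theorem), so it is not wasted effort. Conversely, your treatment of the injectivity property is more explicit than the paper's, which simply declares it ``clear.'' Your argument — reconstruct $C$ from the support of $\rho$, propagate $\mu([\omega])=0$ for $\omega\notin E(C)$ by invariance, and then use the forced-continuation property of a simple cycle (each vertex has a unique outgoing edge inside $C$) to collapse the support of $\mu$ onto one periodic orbit — is sound and is essentially the ``basin of repellency'' idea that the paper develops in Section~4 (Lemma on $B_{\mu,n}$), so you have anticipated a useful mechanism. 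One cosmetic caution: the paper calls the objects of $G_n$ \emph{nodes} and \emph{arcs}, reserving \emph{vertices} and \emph{edges} for polytopes; you should adopt that convention to avoid clashing with the surrounding text.
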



The proposition (or at least its first part) can be deduced from the aforementioned result of Ziemian~\cite{Ziemian}.
It is also related to work by Fuk\'{s}~\cite{Fuks}.
Since we will need finer information about the sets $R_n$
we will provide a self-contained proof.
As in \cite{Ziemian}, we will use some elementary graph theory.

\subsection{Preliminaries on convexity and graph theory}

Let us review some terminology of convex analysis.
Let $C$ be a closed convex subset of Euclidian space.
A subset $\tilde{C} \subset C$ is called a \emph{face} of $C$ if any segment in $C$ whose relative interior intersects $\tilde C$ is actually contained in $\tilde{C}$.
The \emph{proper faces} of $C$ are those different from $\emptyset$ and $C$.
If a singleton $\{v\}$ is a face of $C$ then $v$ is called an \emph{extreme point} of $C$.
If $C$ has finitely many extreme points then they are called \emph{vertices},
and $C$ is called a \emph{(convex) polytope}.
The faces of a polytope are also polytopes; among them,
the $1$-dimensional faces are called \emph{edges}, and proper faces of maximal dimension are called \emph{facets}.

A closed convex set is called a \emph{(convex) polyhedron} if it is an intersection of finitely many affine-half spaces.
A fundamental theorem \cite[Corollary~7.1c]{Schrijver_program} states that bounded polyhedra are polytopes, and vice-versa.

\medskip

Next let us fix some graph theory terminology.
Let $G$ be a directed graph, composed of finitely many \emph{nodes} and \emph{arcs}.\footnote{The terms \emph{vertices} and \emph{edges} are reserved to polytopes.}
We will always assume that no two arcs have the same source and target nodes.
More formally, $G$ is a pair $(N,A)$, where $N$ is a finite set and $A \subset N \times N$. A directed graph $G' = (N',A')$ is a \emph{subgraph} of $G$ if $N'\subset N$ and $A'\subset A$.

A \emph{walk} in $G$ is a finite sequence of the form $n_0 a_1 n_1 \cdots n_{\ell-1} a_\ell n_\ell$
where each $a_i$ is the arc from the node $n_{i-1}$ to the node $n_i$. The positive integer $\ell$ is called the \emph{length} of the walk. A \emph{closed walk} is an equivalence class of walks with equal ending and starting nodes, where ${n_0 a_1 n_1 \cdots n_{\ell-1} a_\ell n_0}$ is equivalent to ${n_1 a_2 n_2 \cdots n_{\ell-1} a_\ell n_0 a_1 n_1}$. A closed walk with no repeated nodes (and consequently, no repeated arcs) is called a \emph{cycle}.

A \emph{circulation} on $G$ is a function that attributes a weight to each arc of $G$
in such a way that:
\begin{itemize}
\item
weights are nonnegative;
\item
the total weight is $1$;
\item
the total weight entering any node is equal to the total weight leaving that node
(Kirchhoff's current law).
\end{itemize}
The \emph{support} of a circulation is the least subgraph where all weights are positive.


The set of functions on the arcs of $G$ can be identified with some $\R^d$.
One such function is a circulation if it satisfies certain linear equalities and inequalities;
therefore the set of circulations is a polyhedron.
Moreover, it is obviously bounded, so it is a polytope.
It is called the \emph{circulation polytope} of $G$.

\begin{proposition}\label{p.faces}
Let $G$ be a directed graph, and let $C$ be its circulation polytope.
Then the faces of $C$ are the circulation polytopes of the subgraphs of $G$,
and the vertices of $C$ are the circulations supported on the cycles of $G$.
\end{proposition}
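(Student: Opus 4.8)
The plan is to work with the standard polyhedral description of the circulation polytope $C \subset \R^A$ and exploit the general principle that faces of a polyhedron are obtained by turning some of the defining inequalities into equalities. The only inequality constraints in the definition of a circulation are the nonnegativity constraints $w_a \ge 0$, one for each arc $a \in A$; the remaining constraints (total weight $1$, Kirchhoff's law at each node) are equalities. Hence a nonempty face $\tilde C$ of $C$ is cut out by additionally imposing $w_a = 0$ for all $a$ in some subset $B \subset A$. First I would make this precise: I would fix a face $\tilde C$, let $B \coloneqq \{a \in A \st w_a = 0 \text{ for all } w \in \tilde C\}$ be the set of arcs that vanish identically on $\tilde C$, and let $G' = (N, A \setminus B)$ be the corresponding subgraph (keeping all nodes; isolated nodes do no harm since Kirchhoff's law at such a node is trivially satisfied). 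The claim is then that $\tilde C$ equals the circulation polytope $C'$ of $G'$, viewed inside $\R^A$ via the obvious inclusion that pads with zeros.

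The inclusion $\tilde C \subset C'$ is immediate: every $w \in \tilde C$ is a circulation vanishing on $B$, hence supported on $G'$, hence a circulation on $G'$. For the reverse inclusion $C' \subset \tilde C$, I would use a relative-interior argument. By definition of $B$, there exists $w^\circ \in \tilde C$ with $w^\circ_a > 0$ for every $a \notin B$ (take a point of the relative interior of $\tilde C$, or average finitely many points of $\tilde C$, one for each arc not forced to zero). Now given any $w \in C'$, for small $t>0$ the point $w^\circ + t(w - w^\circ)$ still has all coordinates outside $B$ nonnegative (because $w^\circ$ is strictly positive there) and all coordinates in $B$ equal to zero, and it still satisfies the equality constraints; so it lies in $C$, in fact on the same minimal set of active inequalities, so it lies in $\tilde C$. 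Thus the open segment from $w^\circ$ through $w$ meets $\tilde C$, and since $\tilde C$ is a face and $w \in C$, the whole segment — in particular $w$ — lies in $\tilde C$. This proves $\tilde C = C'$. Conversely, the same computation shows that for any subgraph $G'$ the set $C'$ of its circulations, sitting inside $C$, is a face: if an open segment in $C$ meets $C'$ then both endpoints must vanish on all arcs outside $G'$ (a nonnegative combination that is zero forces each term to be zero), so both endpoints lie in $C'$. This establishes the first assertion.

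For the second assertion, I would specialize the first to $0$-dimensional faces. A vertex of $C$ is a face that is a singleton $\{w\}$; by the above it is the circulation polytope of some subgraph $G'$, and that polytope is a single point exactly when $G'$ admits a unique circulation. I would then identify which subgraphs have this property. If $G'$ contains two distinct cycles, or a cycle together with extra arcs, one can build two distinct circulations (e.g. by redistributing weight among cycles or by the fact that the circulation polytope of a union of cycles has positive dimension), so uniqueness fails; and if $G'$ has no cycle at all it carries no circulation (a finite directed acyclic graph has no closed walk, so Kirchhoff's law plus positivity is impossible, contradicting total weight $1$). The remaining case is that $G'$ is a single cycle $C_\ell$ of some length $\ell$, possibly with isolated nodes; then Kirchhoff's law forces all $\ell$ arc-weights equal, and the total-weight-$1$ condition pins them to $1/\ell$, giving a unique circulation. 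Hence the vertices of $C$ are exactly the (uniform) circulations supported on cycles of $G$, as claimed.

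The main obstacle is the elementary but slightly fussy graph-combinatorial step in the last paragraph: showing that a subgraph supports a \emph{unique} circulation if and only if it is (up to isolated nodes) a single cycle. One clean way to organize it is to recall the classical decomposition of any circulation as a nonnegative combination of uniform cycle-circulations (a flow-decomposition argument, peeling off one cycle at a time), which immediately gives both that cycles yield the extreme points and that anything with two cycles in its support is a nontrivial convex combination and hence not extreme; the acyclic case is then the easy leftover. Everything else is bookkeeping with the half-space description and the definition of a face.
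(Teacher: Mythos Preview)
Your proposal is correct and follows the same route as the paper: identify faces of $C$ by turning the nonnegativity inequalities into equalities (the paper simply cites Schrijver for this polyhedral fact, while you spell out a relative-interior argument), then specialize to vertices by checking which subgraphs admit a unique circulation. The only minor quibble is that once you invoke the active-set characterization of the face through $w^\circ$, the subsequent segment argument is already redundant; alternatively, if you want the segment argument to do the work on its own, you should extend the segment \emph{past} $w^\circ$ (i.e., consider $(1+s)w^\circ - s w$ for small $s>0$) so that $w^\circ$ lies in its relative interior.
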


As a corollary, every circulation on $G$ is a convex combination of circulations supported on cycles.

\begin{proof}
If a polytope is expressed in terms of linear equalities and inequalities, then its faces can be obtained by setting some of the inequalities to equalities: see \cite[p.~101]{Schrijver_program}. 
The inequalities in our case are ``weights are nonnegative''.
Forcing some of the arcs to have zero weights gives rise to circulations in a subgraph $G'$ with the same set of nodes as $G$.
Removing unconnected nodes from a graph does not affect the possible circulations on it.
Therefore for every subgraph $G'$ of $G$, its circulation polytope $C'$ is a face of $C$, and all faces appear in this way.
Note that $C' \neq \emptyset$ if and only if $G'$ contains a cycle.
Moreover, each cycle supports a unique circulation, and different cycles support different circulations,
so $C'$ is $0$-dimensional if and only if $G'$ contains a single cycle.
\end{proof}

\subsection{Graphical itineraries}\label{ss.itineraries}

For each integer $n \ge 1$, the \emph{de~Bruijn graph} $G_n$
is the directed graph defined as follows:
\begin{itemize}
\item the nodes of $G_n$ are exactly the words of length $n-1$ in the alphabet $\{0,1\}$;
\item the arcs of $G_n$ are exactly the words of length $n$ in the alphabet $\{0,1\}$;
\item for each word $\omega$ of length $n$, the source node and the target node of the arc $\omega$
are respectively its initial and final subwords of length $n-1$.
\end{itemize}
These graphs (together with their analogues for larger alphabets)
were introduced independently by de~Bruijn~\cite{deBruijn} and Good~\cite{Good},
and are relevant to many problems in combinatorics and applied mathematics: see \cite{Maurer} and references therein.
The first four de~Bruijn graphs are pictured in Fig.~\ref{f.graphs}.


\begin{figure}[htb]
\begin{tikzpicture}[baseline] 
   \node(emptyword) at (0, 0){$\emptyset$};
   \draw(emptyword)edge[loop above]  node{\small $0$} (emptyword);
   \draw(emptyword)edge[loop below] node{\small $1$} (emptyword);
\end{tikzpicture}
\qquad
\begin{tikzpicture}[baseline] 
   \node(0) at (0,.75){\small $0$};
   \node(1) at (0,-.75){\small $1$};
   \draw(0)edge[loop above] node{\footnotesize $00$} (0);
   \draw(0)edge[->,bend left=15] node[right]{\footnotesize $01$} (1);
   \draw(1)edge[loop below] node{\footnotesize $11$} (1);
   \draw(1)edge[->,bend left=15] node[left]{\footnotesize $10$} (0);
\end{tikzpicture}
\qquad
\begin{tikzpicture}[baseline] 
   \node(00) at (  0, 1.8){\footnotesize $00$};
   \node(01) at ( .8,   0){\footnotesize $01$};
   \node(10) at (-.8,   0){\footnotesize $10$};
   \node(11) at (  0,-1.8){\footnotesize $11$};
   \draw(00)edge[loop above]       node{\scriptsize $000$} (00);
   \draw(00)edge[->]               node[right]{\scriptsize $001$} (01);
   \draw(01)edge[->,bend right=15] node[above]{\scriptsize $010$} (10);
   \draw(01)edge[->]               node[right]{\scriptsize $011$} (11);
   \draw(10)edge[->]               node[left] {\scriptsize $100$} (00);
   \draw(10)edge[->,bend right=15] node[below]{\scriptsize $101$} (01);
   \draw(11)edge[->]               node[left] {\scriptsize $110$} (10);
   \draw(11)edge[loop below]       node{\scriptsize $111$} (11);
\end{tikzpicture}
\qquad
\begin{tikzpicture}[baseline] 
   \node(000) at (  0, 2.2){\scriptsize $000$};
   \node(001) at ( .8, 1.5){\scriptsize $001$};
   \node(100) at (-.8, 1.5){\scriptsize $100$};
   \node(010) at (  0,  .6){\scriptsize $010$};
   \node(101) at (  0, -.6){\scriptsize $101$};
   \node(011) at ( .8,-1.5){\scriptsize $011$};
   \node(110) at (-.8,-1.5){\scriptsize $110$};
   \node(111) at (  0,-2.2){\scriptsize $111$};
   \draw(000)edge[loop above]      (000);
   \draw(000)edge[->]              (001);
   \draw(001)edge[->]              (010);
   \draw(001)edge[->]              (011);
   \draw(010)edge[->]              (100);
   \draw(010)edge[->,bend left=15] (101);
   \draw(011)edge[->]              (110);
   \draw(011)edge[->]              (111);
   \draw(100)edge[->]              (000);
   \draw(100)edge[->]              (001);
   \draw(101)edge[->,bend left=15] (010);
   \draw(101)edge[->]              (011);
   \draw(110)edge[->]              (100);
   \draw(110)edge[->]              (101);
   \draw(111)edge[->]              (110);
   \draw(111)edge[loop below]      (111);
\end{tikzpicture}
\caption{The de~Bruijn graphs $G_n$ for $1 \le n \le 4$.}\label{f.graphs}
\end{figure}

Any orbit of the one-sided shift $\sigma \colon \Omega \to \Omega$ induces in the obvious way a semi-infinite walk on the de~Bruijn graph $G_n$, called the \emph{$G_n$-itinerary} of the orbit.
If the orbit is periodic then we obtain a closed walk of length equal to the period.
If $\mu$ is the measure supported on the periodic orbit, we denote the induced closed walk
on $G_n$ by $W_n(\mu)$.
The following sets of measures will play an important role:
\begin{equation*}
\cC_n \coloneqq
\big\{ \mu\in \cM_\sigma \text{ periodic} \st W_n(\mu) \text{ is a cycle in } G_n \big\} \, ,
\end{equation*}
or equivalently
\begin{equation}\label{e.ifyoudontlikegraphs}
\begin{aligned}
\cC_n = \big\{ \mu \in \cM_\sigma \text{ periodic} \st \text{each cylinder }&\text{of level $n-1$ contains} \\
&\text{at most one point of $\supp \mu$}\big\} \, .
\end{aligned}
\end{equation}
There is an obvious bijection between $\cC_n$ and the set of cycles in $G_n$.
The initial terms of the sequence $(\cC_n)$ are:
$$
\cC_1 = \big\{ \overline{0}, \, \overline{1} \big\} \subset
\cC_2 = \big\{ \overline{0}, \, \overline{1}, \, \overline{01} \big\} \subset
\cC_3 = \big\{ \overline{0}, \, \overline{1}, \, \overline{01}, \, \overline{001}, \, \overline{011}, \, \overline{0011} \big\} \, ,
$$
where $\overline{01}$, for example, denotes the periodic measure supported on the orbit of the point $(0,1,0,1,\dots) \in \Omega$.

\begin{remark}\label{r.complexity}
The nested union $\bigcup_{n\ge 1} \cC_n$ equals the set of periodic measures in $\cM_\sigma$.
Following Maurer~\cite{Maurer}, we define the \emph{recursive complexity}
of a periodic measure $\mu$ as the least $n$ such that $\mu \in \cC_n$.\footnote{Note that Maurer's definition is slightly different since his $G_n$ is our $G_{n+1}$.}
Recursive complexity is not monotonic with respect to the period:
for example $\overline{000111}$ and $\overline{01011}$
have recursive complexities $4$ and $5$, respectively.
\end{remark}

\begin{remark}\label{r.hamiltonian}
It seems to be a difficult task to count precisely the number of elements of $\cC_n$
(or equivalently the number of cycles in $G_n$): see \cite{Maurer} for related results.
Let us mention a celebrated result by de~Bruijn \cite{deBruijn}:
for $n \ge 2$, there are exactly
$2^{2^{n-2} - n + 1}$
cycles in $G_n$ of maximal length $2^{n-1}$;
these are the Hamiltonian cycles of $G_n$, i.e., the cycles that visit every node.\footnote{Incidentally, these cycles are exactly the lifts of the Eulerian closed walks on $G_{n-1}$.}
\end{remark}

\subsection{The structure of the rotation sets $R_n$}\label{ss.structure}

Let us use the above material to describe precisely the rotation sets
$R_n =  \pi_n(\cM_\sigma)$ introduced in \S\ref{s.rot_for_shift}.
The following result is a finer version of Proposition~\ref{p.ziemian}:

\begin{proposition}\label{p.rot_and_dbg}
For all $n \ge 1$, the rotation set $R_n$ is isomorphic to
the circulation polytope of the de~Bruijn graph $G_n$.
Moreover, $R_n$ has the injectivity property:
the restriction of $\pi_n$ to the set $\cC_n$ is a bijection onto the vertices of $R_n$.
\end{proposition}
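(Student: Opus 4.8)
The plan is to exhibit an explicit linear isomorphism between $\R^{2^n}$ (the target of $\pi_n$) and the space $\R^{A}$ of functions on the arcs $A$ of $G_n$, carrying $R_n$ onto the circulation polytope $C_n$, and then to identify the preimages of vertices. First I would set up coordinates: an arc of $G_n$ is a word $\omega$ of length $n$, so $\R^A \cong \R^{2^n}$, the same dimension as the target of $\pi_n$. Given an invariant probability $\mu \in \cM_\sigma$, assign to the arc $\omega$ the weight $\mu([\omega])$. I claim this assignment lands in $C_n$: nonnegativity and total weight $1$ are clear ($\sum_{|\omega|=n}\mu([\omega]) = \mu(\Omega) = 1$), and Kirchhoff's law at a node $u$ (a word of length $n-1$) reads $\sum_{i\in\{0,1\}}\mu([iu]) = \sum_{i\in\{0,1\}}\mu([ui])$, i.e.\ $\mu(\sigma^{-1}[u]) = \mu([u])$, which is exactly $\sigma$-invariance of $\mu$. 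So we get a well-defined affine map $R_n \to C_n$. Composing with the obvious coordinate identification, this is the restriction of a linear map $\R^{2^n}\to\R^{2^n}$; I would check it is invertible by noting that the cylinder probabilities $\mu([\omega])$ with $|\omega|=n$ are recovered linearly from $\pi_n(\mu)$ (in fact they ARE the coordinates of $\pi_n(\mu)$ up to reindexing), so the map $\pi_n(\mu)\mapsto(\mu([\omega]))_{|\omega|=n}$ is literally a permutation of coordinates. Hence $R_n$ and $C_n$ are isomorphic as polytopes.

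For surjectivity onto $C_n$ — which is needed to conclude $R_n \cong C_n$ rather than merely $R_n \hookrightarrow C_n$ — I would argue that every circulation on $G_n$ arises from an invariant measure. By the corollary to Proposition~\ref{p.faces}, it suffices to realize each vertex of $C_n$, i.e.\ each circulation supported on a single cycle of $G_n$. A cycle $n_0 a_1 n_1 \cdots a_\ell n_0$ in $G_n$ is a closed walk reading off a periodic word of period $\ell$, and the uniform circulation on it (weight $1/\ell$ on each arc) is precisely the arc-weight vector of the periodic measure supported on the orbit of that word. Moreover the ``no repeated nodes'' condition for a cycle translates exactly into the defining condition of $\cC_n$ in \eqref{e.ifyoudontlikegraphs}: distinct arcs of the cycle have distinct source nodes, meaning the $\ell$ points of the periodic orbit lie in $\ell$ distinct cylinders of level $n-1$. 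So the bijection between $\cC_n$ and cycles of $G_n$ (already noted in \S\ref{ss.itineraries}) corresponds under our isomorphism to the bijection between $\cC_n$-measures and vertices of $C_n$. Taking convex combinations, every circulation is $\pi_n$ of some invariant measure, giving surjectivity.

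It remains to establish the injectivity property: for each vertex $\rho$ of $R_n$, there is a \emph{unique} $\mu\in\cM_\sigma$ with $\pi_n(\mu)=\rho$, and it lies in $\cC_n$. Existence and membership in $\cC_n$ follow from the previous paragraph. For uniqueness, suppose $\pi_n(\mu)=\rho$ where $\rho$ corresponds to the cycle supporting the periodic measure $\mu_0\in\cC_n$; I must show $\mu=\mu_0$. The vertex $\rho$ has support a single cycle, so $\mu([\omega])=0$ for every length-$n$ word $\omega$ not an arc of that cycle, and $\mu$-almost every point has all its length-$n$ subwords among the arcs of the cycle — forcing every point in $\supp\mu$ to have its forward orbit trace out that cycle, hence to be the periodic point itself (a cycle read cyclically determines a unique periodic point). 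So $\supp\mu$ is contained in that single periodic orbit, whence $\mu=\mu_0$ by ergodicity/extremality of the periodic measure. I expect the main obstacle to be making this last uniqueness argument airtight: one must carefully verify that positivity of $\mu$ exactly on the arcs of the cycle, together with invariance, pins down $\mu$ to the periodic measure — the delicate point being to rule out $\mu$ charging points whose orbits are not periodic but still only ever exhibit length-$n$ words lying in the cycle. Here the ``no repeated nodes'' structure of a cycle is essential: reading a bi-infinite (or forward) itinerary that only uses those arcs, at each node there is a unique outgoing arc within the cycle, so the itinerary is forced and periodic. Everything else is bookkeeping with the coordinate identifications.
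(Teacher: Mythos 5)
Your proof follows the same route as the paper: identify arc weights with cylinder measures $\mu([\omega])$, derive Kirchhoff's law at each node from $\sigma$-invariance via the disjoint unions $[\omega]=[\omega 0]\cup[\omega 1]$ and $\sigma^{-1}([\omega])=[0\omega]\cup[1\omega]$, and obtain surjectivity by realizing cycle circulations via the measures in $\cC_n$ and then taking convex combinations (Proposition~\ref{p.faces}). The only difference is that you spell out the injectivity property in detail --- showing that a measure projecting to a vertex must be supported on the corresponding periodic orbit because the ``no repeated nodes'' condition forces the itinerary --- whereas the paper simply declares this step ``clear''; your argument is correct and is essentially what the authors leave to the reader.
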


\begin{proof}
Suppose $\rho \in R_n$, that is, $x = \pi_n(\mu)$ for some $\mu \in \cM_\sigma$.
The vector $x$ gives a weight to each cylinders of level $n$
and thus to each  arc of $G_n$.
We claim that this weight function is a circulation.
Weights are obviously nonnegative and sum to $1$,
so we are left to verify Kirchhoff's current law.
Given any node $\omega$, that is, a word of length $n-1$, we have
the following disjoint unions:
$$
[\omega] = [\omega 0] \cup [\omega 1]  \, , \qquad
\sigma^{-1}([\omega]) = [0 \omega] \cup [1 \omega]  \, .
$$
So $\mu([\omega])$ is the total weight leaving the node $\omega$,
while $\mu(\sigma^{-1}([\omega]))$ is the total weight entering it.
They are equal by invariance of the measure.
So Kirchhoff's current law is satisfied,
and we have shown that $x$ induces a circulation on $G_n$.

Conversely, let us check that every circulation on $G_n$ is of the form above.
This is obvious if the circulation is supported on a cycle,
and since a general circulation is a convex combination of circulations supported on cycles
(by Proposition~\ref{p.faces}),
the claim follows.

The injectivity property is clear, and the proposition is proved.
\end{proof}

Proposition~\ref{p.rot_and_dbg} permits us to deduce other properties of rotation set $R_n$.
For example, the dimension of $R_n$ is $2^{n-1}$.\footnote{Sketch of proof: Consider circulations of $G_n$ as vectors in $\R^{2^n}$. Then $R_n = \Delta \cap K$, where $\Delta$ is the standard unit simplex in $\R^{2^n}$, and  $K$ is the vector subspace corresponding to Kirchhoff's current law. One checks that $K$ has codimension $2^{n-1} - 1$ and is transverse to $\Delta$. It follows that $\dim R_n = 2^{n-1}$.}
The face structure of $R_n$ can be studied using Proposition~\ref{p.faces}:
an example is shown in Fig.~\ref{f.bird}.
Since $G_n$ has $2^n$ arcs, it follows from Propositions~\ref{p.rot_and_dbg}
and \ref{p.faces} that $R_n$ has no more than $2^{2^n}$ faces.

\begin{figure}[htb]
\begin{tikzpicture}[x=.45mm,y=.45mm,font=\footnotesize]
   \coordinate[label=above:$\overline{0}$]     (L0)    at ( 76+12,124);
   \coordinate[label=right:$\overline{01}$]    (L01)   at ( 94+12, 92);
   \coordinate                                 (L001)  at ( 81+12, 81);
   \coordinate[label=left:$\overline{1}$]      (L1)    at ( 48+12, 92);
   \node[below,align=center] at (L001) {$\overline{001}$\\or $\overline{011}$};
   \draw(L1)--(L0);
   \draw[dashed](L1)--(L01);
   \draw(L1)--(L001);
   \draw(L0)--(L01)--(L001)--cycle;
   \coordinate[label=above:$\overline{0}$ or $\overline{1}$]   (M0)    at (153,124);
   \coordinate[label=left:$\overline{01}$]                     (M01)   at (138, 92);
   \coordinate[label=below:$\overline{011}$]                   (M011)  at (145, 81);
   \coordinate[label=below:$\overline{0011}$]                  (M0011) at (163, 84);
   \coordinate[label=right:$\overline{001}$]                   (M001)  at (165, 92);
   \draw(M0)--(M01)--(M011)--cycle;
   \draw(M0)--(M0011)--(M001)--cycle;
   \draw[dashed](M01)--(M001);
   \draw(M011)--(M0011);
   \coordinate[label=above:$\overline{0}$]     (R0)    at (218,124);
   \coordinate                                 (R001)  at (209, 92);
   \coordinate[label=below:$\overline{0011}$]  (R0011) at (213, 84);
   \coordinate[label=right:$\overline{1}$]     (R1)    at (252, 92);
   \node[left,align=right] at (R001) {$\overline{001}$\\or $\overline{011}$};
   \draw(R001)--(R0);
   \draw[dashed](R001)--(R1);
   \draw(R001)--(R0011);
   \draw(R0)--(R0011)--(R1)--cycle;
\end{tikzpicture}
\caption{The polytope $R_3$ has $6$ facets: $4$ tetrahedra and $2$ pyramids (pictured). The total number of faces is $1+6+13+13+6+1=40$.}\label{f.bird}
\end{figure}

\smallskip

We close this section mentioning other facts that,
despite not being needed for the rest of the paper,
have independent interest.

\begin{remark}
In view of the characterization \eqref{e.finite_dim_opt},
the computation of the ergodic supremum of a function in $\cS_n$ becomes
equivalent to a \emph{maximum cycle mean problem} for the de~Bruijn graph $G_n$.
These problems have been studied in the combinatorics optimization literature,
and efficient algorithms are known: see \cite{Karp,DIG}.\footnote{It is interesting to note that Karp's algorithm relies on a graph-theoretical version of Ma\~n\'e lemma. In this regard, a function $f$ on the set of arcs is called a \emph{coboundary} if there exists a function $h$ on its set of nodes such that
$f(a) = h(n_1) - h(n_0)$ whenever $a$ is an arc from node $n_0$ to node $n_1$.}
Graphs with \emph{random} weights on the arcs had also been studied: see \cite{MW}.
\end{remark}

\begin{remark}\label{r.proj_lim}
The Poulsen simplex $\cM_\sigma$ can be recovered from the sequence of polytopes $(R_n)$
as an inverse limit.
\end{remark}

\begin{remark}
Using the appropriate graphs,
it is straightforward to adapt the results above to general subshifts of finite type
and in particular reobtain the results of~\cite{Ziemian}.
\end{remark}

\section{A gap criterion for the locking property}\label{s.gap}

\subsection{Statement of the criterion}

Consider a step function  $g \in \cS_n$ of level $n \ge 1$.
By the facts discussed in the previous
section, in terms of notation \eqref{e.new_inner_product} we have
$\ergsup g = \llangle g , v \rrangle$ for some vertex $v$ of the
polytope $R_n$.
Define the \emph{$n$-gap} of $g$ as (see Fig.~\ref{f.gap}):
$$
\gap_n (g) \coloneqq \llangle g, v \rrangle - \max_{\substack{w \text{ vertex of } R_n \\ w\neq v}}  \llangle g, w \rrangle \, .
$$
Note that in the expression above it is sufficient to consider the vertices $w$
such that $[v,w]$ is an edge of the polytope $R_n$.

Let $\mu$ be the measure in $\cM_\sigma$ such that $v = \pi_n(\mu)$.
Then $\ergsup g = \langle g , \mu \rangle$, i.e.\ $\mu$ is a maximizing measure for $g$;
moreover, $\mu \in \cC_n$, and in particular it is a periodic measure.
The $n$-gap can be written as:
$$
\gap_n (g) = \langle g, \mu \rangle - \max_{\nu \in \cC_n \setminus \{\mu\} }  \langle g, \nu \rangle \, .
$$
Note that $\gap_n (g)$ is positive if $g$ has no other maximizing measure in $\cM_\sigma$,
and is zero otherwise.
In the former case, every $\tilde g \in \cS_n$ sufficiently close to $g$
also has an unique maximizing measure, which is $\mu$.

\begin{figure}[htb]
\begin{tikzpicture}[scale=.35, rotate=-15, font=\small]
   \coordinate (A) at (3.5,6);
   \coordinate (B) at ( .8,5);
   \coordinate (C) at (0  ,3);
   \coordinate (D) at (1  ,1);
   \coordinate (E) at (3  ,0);
   \coordinate (F) at (6  ,2);
   \coordinate (G) at (5.5,4);
   \draw[fill=black!20!white](A)--(B)--(C)--(D)--(E)--(F)--(G)--cycle;
   \foreach \point in {A,B,C,D,E,F,G} \fill [black] (\point) circle (3pt);
   \foreach \y in {6,5,3,1,0,2,4} \draw[very thin] (-.5,\y)--(7,\y);
   \node[above right] at (A) {$v = \pi_n(\mu)$};
   \node at (3.15,3.55) {$R_n$};
   \draw[<->](-.5,6)--(-.5,5);
   \node[left] at (-.5,5.5) {$\gap_n(g)$};
   \draw [decorate,decoration={brace,amplitude=10pt,mirror},xshift=3pt] (7,0) -- (7,6);
   \node[right] at (7.8,3) {level sets of $\llangle g ,  \mathord{\cdot}\rrangle$};
\end{tikzpicture}
\caption{Gap of a function $g \in \cS_n$.} 
\label{f.gap}
\end{figure}

Recall that $A_n f$ denotes the truncated Haar series \eqref{e.A_truncated} of a function $f$, and so it belongs to $\cS_n$.

The following lemma plays a pivotal role in this paper:

\begin{lemma}[Gap criterion]\label{l.gap}
Let $f \in C^0(\Omega)$ and $n \in \N$.
Assume that the following \emph{gap condition} is satisfied:
\begin{equation}\label{e.gap}
\gap_n(A_n f) > \sum_{k=n}^\infty (k-n+1) \max_{|\omega|=k} |c_\omega(f)| \, .
\end{equation}
Then the maximizing measure for $A_n f$ (which is unique and periodic)
is also the unique maximizing measure for $f$.
\end{lemma}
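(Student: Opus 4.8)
The plan is to compare the ergodic optimization problem for $f$ with the finite-dimensional problem for $A_n f \in \cS_n$, controlling the error by the tail of the Haar series. Write $f = A_n f + r$, where $r = \sum_{k \ge n} \sum_{|\omega| = k} c_\omega(f) h_\omega$ is the ``remainder''. The key quantity to bound is, for an arbitrary invariant measure $\mu \in \cM_\sigma$, the difference $\langle r, \mu \rangle$ in terms of where $\supp \mu$ sits relative to the de~Bruijn combinatorics at level $n$. First I would record the basic estimate $|\langle r, \mu \rangle| \le \sum_{k \ge n} \bar c_k(f)/2$ (from Proposition~\ref{p.Haar_convergence}), but this crude bound is not enough; the point of the weight $(k-n+1)$ in \eqref{e.gap} is that one must instead estimate the \emph{difference} $\langle r, \mu \rangle - \langle r, \mu' \rangle$ for two measures $\mu, \mu'$ that project to the \emph{same} point of $R_n$, i.e. $\pi_n(\mu) = \pi_n(\mu')$.

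The central lemma I would isolate is the following: if $\pi_n(\mu) = \pi_n(\mu')$ then $|\langle r, \mu\rangle - \langle r, \mu'\rangle| \le \sum_{k \ge n}(k-n+1)\bar c_k(f)$ — actually one wants this for $\mu' \in \cC_n$ a vertex preimage. The reason a factor like $(k-n+1)$ appears: a Haar function $h_\omega$ with $|\omega| = k \ge n$ is constant on cylinders of level $k+1$, and its integral against $\mu$ depends on the measures of cylinders of level up to $k+1$; for invariant measures, knowing the level-$n$ cylinder measures pins down, via Kirchhoff/invariance (Proposition~\ref{p.rot_and_dbg}), the level-$n$ data but each further level of refinement is only constrained up to an ``error'' that accumulates linearly in $k-n$. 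Concretely I would write $\langle h_\omega, \mu\rangle = \tfrac12(\mu[\omega 0] - \mu[\omega 1])$ and telescope: express the level-$(k+1)$ cylinder measures of $\mu$ in terms of level-$n$ data plus $k-n+1$ ``free'' increments each bounded in absolute value by the relevant cylinder mass, summing to $1$. Carefully done, this yields $\sum_{|\omega|=k}|\langle h_\omega, \mu\rangle - \langle h_\omega,\mu'\rangle| \le (k-n+1)$, hence the claimed bound on $\langle r,\mu\rangle - \langle r,\mu'\rangle$ after multiplying by $\bar c_k(f)$ and summing over $k$.

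Granting this, the proof concludes by a standard gap argument. Let $v$ be the vertex of $R_n$ attaining $\ergsup(A_n f) = \llangle A_n f, v\rrangle$, and let $\mu \in \cC_n$ be its unique preimage (injectivity property, Proposition~\ref{p.rot_and_dbg}), which is periodic. For any $\nu \in \cM_\sigma$, let $v' = \pi_n(\nu)$, write $v'$ as a convex combination of vertices of $R_n$, and note $\llangle A_n f, v'\rrangle \le \ergsup(A_n f)$ with equality iff $v' = v$; moreover $\langle A_n f, \nu\rangle = \llangle A_n f, v'\rrangle$ by the projection property \eqref{e.projection_property}. If $v' \ne v$ then $\llangle A_n f, v'\rrangle \le \ergsup(A_n f) - \gap_n(A_n f)$, so $\langle f,\nu\rangle = \langle A_n f,\nu\rangle + \langle r,\nu\rangle \le \ergsup(A_n f) - \gap_n(A_n f) + \tfrac12\sum_{k\ge n}\bar c_k(f)$, while $\langle f, \mu\rangle = \ergsup(A_n f) + \langle r,\mu\rangle \ge \ergsup(A_n f) - \tfrac12\sum_{k\ge n}\bar c_k(f)$; the gap condition then forces $\langle f,\nu\rangle < \langle f,\mu\rangle$. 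If instead $v' = v$, then $\pi_n(\nu) = \pi_n(\mu)$ and the central lemma gives $\langle f,\nu\rangle - \langle f,\mu\rangle = \langle r,\nu\rangle - \langle r,\mu\rangle \le \sum_{k\ge n}(k-n+1)\bar c_k(f) < \gap_n(A_n f)$... wait — here I need the inequality to be strict in the right direction, so I would instead argue: for $\nu$ with $\pi_n(\nu)=\pi_n(\mu)$ and $\nu \ne \mu$, one still has $\langle f,\nu\rangle < \langle f,\mu\rangle$ because the gap condition bounds this difference below the gap, but actually equality $\langle f,\nu\rangle = \langle f,\mu\rangle$ is not yet excluded by the tail bound alone — so the clean statement is: $\langle f, \nu \rangle \le \langle f, \mu \rangle$ for \emph{all} $\nu$, with the gap condition ensuring any maximizer must satisfy $\pi_n(\nu) = \pi_n(\mu) = v$, and then among such $\nu$ one uses that $\mu$ is the \emph{unique} preimage in $\cC_n$ together with a separate argument (or a strengthening of the tail lemma to a strict inequality when $\supp\nu$ escapes a level-$n$ cylinder structure) to conclude $\nu = \mu$.

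The main obstacle is precisely making this last step airtight: the tail estimate shows a maximizing measure $\nu$ must project to $v$ under $\pi_n$, but the fiber $\pi_n^{-1}(v) \cap \cM_\sigma$ is generally a nontrivial face, not the singleton $\{\mu\}$ — the injectivity property only says $\mu$ is the unique preimage \emph{within} $\cC_n$, i.e. among periodic measures with at most one point of support per level-$(n-1)$ cylinder. So one must show that any $\nu$ in that fiber other than $\mu$ strictly loses; this requires the refined lemma of the second paragraph (the strict version, using that the Haar coefficients $c_\omega(f)$ with $|\omega| \ge n$ are not all zero unless $f \in \cS_n$, together with the geometry that distinguishes $\mu$ as the vertex) rather than the crude tail bound. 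I expect the bulk of the work, and the reason the weight $(k-n+1)$ is exactly what appears in \eqref{e.gap}, to lie in this finer telescoping estimate for $\langle r,\cdot\rangle$ restricted to a fiber of $\pi_n$.
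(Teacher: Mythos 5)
Your proposal has a genuine gap at its core. The claimed central lemma --- that $\pi_n(\mu)=\pi_n(\mu')$ implies $\sum_{|\omega|=k}|\langle h_\omega,\mu\rangle-\langle h_\omega,\mu'\rangle|\le(k-n+1)$, hence $|\langle r,\mu\rangle-\langle r,\mu'\rangle|\le\sum_{k\ge n}(k-n+1)\bar c_k$ --- is in fact weaker than the trivial bound: level-$k$ Haar functions have pairwise disjoint supports and $|h_\omega|\le\tfrac12$, so one always has $\sum_{|\omega|=k}|\langle h_\omega,\mu\rangle-\langle h_\omega,\mu'\rangle|\le 1$ for any two probability measures, with no hypothesis at level $n$. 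So the factor $(k-n+1)$ cannot arise from telescoping Haar levels. Worse, even the (better) crude tail bound does not close the dichotomy you outline: if $\pi_n(\nu)=(1-\epsilon)\pi_n(\mu)+\epsilon w$ for small $\epsilon>0$ and $w$ an adjacent vertex, then the finite-dimensional gain is only $\langle A_n f,\mu-\nu\rangle=\epsilon\gap_n(A_n f)$, which is swamped by a tail estimate that is bounded away from zero. Thus neither branch of your dichotomy yields strict inequality for $\nu$ whose projection is close to, but distinct from, $\pi_n(\mu)$, and this is not a technical nuisance but the heart of the problem.

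The missing idea is to make both the finite-dimensional gain and the tail loss scale with a common escape quantity: the mass of $\nu$ outside the basin of repellency $B_{\mu,n}$ introduced in \S\ref{ss.basin}. There the factor $(k-n+1)$ really comes from: by Lemma~\ref{l.referee}, $B_{\mu,k}^{\mathsf{c}}=\bigcup_{j=0}^{k-n}\sigma^{-j}(B_{\mu,n}^{\mathsf{c}})$, so an invariant $\xi$ satisfies $\xi(B_{\mu,k}^{\mathsf{c}})\le(k-n+1)\xi(B_{\mu,n}^{\mathsf{c}})$, and Lemma~\ref{l.cancel} then gives $\langle g_k,\xi-\mu\rangle\le 2(k-n+1)\|g_k\|_\infty\,\xi(B_{\mu,n}^{\mathsf{c}})$ for $\xi\in\cC_k$ of period at least $\per(\mu)$. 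The paper decomposes each $\pi_k(\nu)$ as a convex combination over $\cC_k$, introduces the auxiliary quantities $u_k$ and proves they are monotone (Lemma~\ref{l.monotonicity}) so all tail levels can be compared against $u_n$, and arrives at $\langle f,\mu-\nu\rangle\ge\delta\,\nu_n(B_{\mu,n}^{\mathsf{c}})$ with $\delta>0$ under \eqref{e.gap}; equality then forces $\supp\nu\subset\supp\mu$ and $\nu=\mu$. As a side remark, the uniqueness step you worry about is actually not an obstacle: the injectivity property is defined in the paper for the fiber inside all of $\cM_\sigma$, and it holds because $\pi_n(\nu)$ being a vertex forces $\supp\nu\subset B_{\mu,n}$, and invariance of the support then forces $\supp\nu\subset\bigcap_{j\ge0}\sigma^{-j}(B_{\mu,n})=\supp\mu$; but you cannot reach that step without the basin-of-repellency machinery, which your plan lacks.
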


In informal terms, if the tail of the Haar series is small compared to the gap
of its initial part, then it does not influence the maximizing measure;
in particular property $\mathsf{P}$ is satisfied.
The lemma actually permits us to obtain the stronger locking property:

\begin{corollary}[Gap criterion version 2]\label{c.gap}
Consider a sequence $\ba = (a_n) \searrow 0$ such that:
\begin{equation}\label{e.summability}
\sum_{n=0}^\infty n a_n < \infty \, .
\end{equation}
If a function $f \in C^\ba(\Omega)$ satisfies the gap condition \eqref{e.gap}
for some $n \in \N$ then $f \in \lock$.
\end{corollary}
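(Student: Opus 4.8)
The plan is to deduce Corollary~\ref{c.gap} directly from Lemma~\ref{l.gap} together with the elementary estimate \eqref{e.Lip_to_Haar}. The only extra work beyond Lemma~\ref{l.gap} is to upgrade ``$f$ has a unique maximizing measure which is periodic'' to ``$f \in \lock$,'' i.e.\ to show the gap condition is an \emph{open} condition that forces the locking property, not just property~$\mathsf{P}$.

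First I would observe that for $f \in C^\ba(\Omega)$, the bound \eqref{e.Lip_to_Haar} gives $\max_{|\omega|=k}|c_\omega(f)| \le a_k \Lip_\ba(f)$, so the right-hand side of the gap condition \eqref{e.gap} is bounded by
$$
\Lip_\ba(f) \sum_{k=n}^\infty (k-n+1) a_k \le \Lip_\ba(f) \sum_{k=n}^\infty (k+1) a_k \, ,
$$
which is a finite quantity because of the summability hypothesis \eqref{e.summability} (note $\sum (k+1)a_k < \infty$ is equivalent to $\sum k a_k < \infty$ since $a_k \to 0$ is summable). So the series on the right of \eqref{e.gap} converges, and Lemma~\ref{l.gap} applies verbatim: the truncation $A_n f \in \cS_n$ has a unique periodic maximizing measure $\mu \in \cC_n$, and this $\mu$ is also the unique maximizing measure for $f$.

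It remains to show $f \in \lock$, i.e.\ that every $\tilde f \in C^\ba(\Omega)$ sufficiently $\Lip_\ba$-close to $f$ is also locked to $\mu$. The key point is that the gap condition \eqref{e.gap} is stable under small perturbations in $C^\ba(\Omega)$: the left-hand side $\gap_n(A_n f)$ depends continuously (in fact Lipschitz-continuously) on the finitely many Haar coefficients $c_\omega(f)$, $|\omega|<n$, since $\gap_n$ is a difference of maxima of linear functionals $\llangle \mathord{\cdot}, \mathord{\cdot}\rrangle$ over finite vertex sets of $R_n$ and the maximizing vertex $v$ is locally constant when the gap is positive; and the right-hand side is bounded by $\Lip_\ba(\tilde f)\sum_{k\ge n}(k-n+1)a_k$, which varies little since $\|\tilde f - f\|_{\Lip_\ba}$ small forces $|\Lip_\ba(\tilde f) - \Lip_\ba(f)|$ small and also $|c_\omega(\tilde f) - c_\omega(f)|$ small for the finitely many relevant $\omega$. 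Hence \eqref{e.gap} holds for $\tilde f$ with the same $n$, so by the first part $\tilde f$ has $\mu$ as its unique maximizing measure. Since this holds for all $\tilde f$ in a $\Lip_\ba$-neighbourhood of $f$, we conclude $f \in \lock$.

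The main obstacle, such as it is, is purely bookkeeping: making precise that ``$\gap_n$ depends continuously on the Haar coefficients with the perturbed function's maximizing vertex remaining $v$.'' One clean way to handle this is to note that, directly from the formula for $\gap_n$ as $\llangle g,v\rrangle - \max_{w\neq v}\llangle g,w\rrangle$, for any two step functions $g, \tilde g \in \cS_n$ one has $|\gap_n(\tilde g) - \gap_n(g)| \le 2 \max_{w \text{ vertex of } R_n} |\llangle \tilde g - g, w\rrangle| \le 2\|\tilde g - g\|_\infty$ (using that each vertex of $R_n$ corresponds to a probability measure, so $|\llangle h, w\rrangle| \le \|h\|_\infty$) \emph{provided} $\gap_n(g) > 0$, because then the maximizing vertex for $\tilde g$ agrees with that of $g$ once $\|\tilde g - g\|_\infty$ is small compared to $\gap_n(g)$; this makes the stability estimate completely explicit and avoids any delicate argument about faces of $R_n$.
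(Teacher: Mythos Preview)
Your proposal is correct and follows essentially the same approach as the paper: use \eqref{e.Lip_to_Haar} and \eqref{e.summability} to see that both sides of \eqref{e.gap} are continuous on $C^\ba(\Omega)$, hence the gap condition defines an open set on which Lemma~\ref{l.gap} applies and the maximizing vertex of $A_n f$ (and thus the maximizing measure) is locally constant. One small wording issue: when you argue stability of the right-hand side you invoke the \emph{upper bound} $\Lip_\ba(\tilde f)\sum_{k\ge n}(k-n+1)a_k$ and say it ``varies little,'' but what you actually need is that the right-hand side itself is continuous in $\tilde f$---this follows directly from $|c_\omega(\tilde f)-c_\omega(f)|\le a_{|\omega|}\Lip_\ba(\tilde f-f)$ summed against $(k-n+1)$, which is the argument the paper has in mind.
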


\begin{proof}
In view of estimate \eqref{e.Lip_to_Haar},
assumption \eqref{e.summability} implies that, for every $n \in \N$,
the RHS of \eqref{e.gap} converges for any function
$f \in C^\ba(\Omega)$, and moreover it depends continuously on $f$.
The LHS is also continuous, so the functions $f$ in the space $C^\ba(\Omega)$ that satisfy the gap condition \eqref{e.gap} form an open subset $\cU_n$.
By Lemma~\ref{l.gap}, each $f \in \cU_n$ has a unique maximizing measure,
which coincides with the unique maximizing measure for $A_n f$ and so is periodic.
As observed before, maximizing measures of functions in $\cS_n$
are locally constant whenever the gap is positive.
It follows that $\cU_n \subset \lock$, as we wanted to show.
\end{proof}

The proof of Lemma~\ref{l.gap} will occupy the rest of this section,
and requires a few other lemmas.

\subsection{Basins of repellency}\label{ss.basin}

Given $\mu \in \cM_\sigma$ and $k \in \N$,
let $B_{\mu,k} \subset \Omega$ be the union of all cylinders of level $k$
that intersect the support of $\mu$.

\begin{lemma}\label{l.referee}
If $\mu \in \cC_n$ then for all $s \in \N$ we have:
$$
\bigcap_{j=0}^s \sigma^{-j}(B_{\mu,n}) = B_{\mu,n+s} \, .
$$
\end{lemma}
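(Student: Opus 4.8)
The plan is to prove the equality by a double inclusion, using the combinatorial description \eqref{e.ifyoudontlikegraphs} of $\cC_n$. First I would unwind the definitions: a point $x \in \Omega$ belongs to $B_{\mu,k}$ precisely when its initial word $x_0 \cdots x_{k-1}$ of length $k$ equals the initial word of some point of $\supp\mu$; equivalently, the cylinder $[x_0 \cdots x_{k-1}]$ meets $\supp\mu$. Likewise $x \in \sigma^{-j}(B_{\mu,n})$ means that $\sigma^j(x) \in B_{\mu,n}$, i.e.\ the length-$n$ word $x_j \cdots x_{j+n-1}$ is the initial word of some point of $\supp\mu$. Since $\supp\mu$ is $\sigma$-invariant, "initial word of a point of $\supp\mu$" is the same as "some length-$n$ subword appearing in $\supp\mu$ starting at position $j$ of a point of $\supp\mu$"; I would phrase this in terms of the set of length-$\ell$ words that appear in $\supp\mu$.

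For the inclusion $B_{\mu,n+s} \subset \bigcap_{j=0}^s \sigma^{-j}(B_{\mu,n})$: if $x \in B_{\mu,n+s}$, then $x_0 \cdots x_{n+s-1}$ is an initial word of some $z \in \supp\mu$; for each $j \in \{0,\dots,s\}$ the word $x_j \cdots x_{j+n-1}$ is a subword of $z$ starting at position $j$, hence $\sigma^j(z) \in \supp\mu$ has this as its initial word, so $\sigma^j(x) \in B_{\mu,n}$. This direction needs no hypothesis on $\mu$ and is immediate. The hard part is the reverse inclusion, and this is exactly where the assumption $\mu \in \cC_n$ enters. The key point is a \emph{unique continuation} phenomenon: by \eqref{e.ifyoudontlikegraphs}, each cylinder of level $n-1$ contains at most one point of $\supp\mu$, so a length-$(n-1)$ word that appears in $\supp\mu$ can be extended to the right inside $\supp\mu$ in only one way. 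In graph terms (using $W_n(\mu)$ being a cycle in $G_n$), from any node of the support subgraph there is a unique outgoing arc. I would argue by induction on $s$: suppose $x \in \bigcap_{j=0}^{s}\sigma^{-j}(B_{\mu,n})$. By the inductive hypothesis $x_0 \cdots x_{n+s-2}$ is the initial word of some $z \in \supp\mu$. Also $x_{s}\cdots x_{s+n-1} \in B_{\mu,n}$ tells us $x_s \cdots x_{s+n-1}$ is an initial word of some $w \in \supp\mu$; but $\sigma^s(z) \in \supp\mu$ has initial word $x_s \cdots x_{n+s-2}$ of length $n-1$, and by uniqueness $\sigma^s(z)$ and $w$ lie in the same level-$(n-1)$ cylinder, hence coincide, forcing $z_{n+s-1} = x_{n+s-1}$. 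Thus $x_0\cdots x_{n+s-1}$ is the initial word of $z \in \supp\mu$, i.e.\ $x \in B_{\mu,n+s}$.

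I expect the main obstacle to be bookkeeping the indices cleanly and making the unique-continuation step rigorous — specifically, justifying that two points of $\supp\mu$ agreeing on their first $n-1$ coordinates must be equal, which is precisely the content of \eqref{e.ifyoudontlikegraphs}, and then propagating this one coordinate at a time. An alternative, perhaps cleaner, presentation would bypass the induction: identify $\supp\mu$ with the periodic orbit and its $G_n$-itinerary $W_n(\mu)$, note that the support subgraph has out-degree (and in-degree) exactly $1$ at each of its nodes, and observe that $\bigcap_{j=0}^s \sigma^{-j}(B_{\mu,n})$ consists of those $x$ whose first $n+s$ symbols spell a length-$s$ walk in this subgraph; since each node has a unique outgoing arc, such a walk is determined by its starting node, giving exactly the length-$(n+s)$ words appearing in $\supp\mu$, which is $B_{\mu,n+s}$. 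I would likely include whichever of these two arguments reads more transparently, and keep the computation short since it is routine once the uniqueness is isolated.
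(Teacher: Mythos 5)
Your proof is correct and follows essentially the same route as the paper's: both directions are handled by tracking which point of $\supp\mu$ witnesses each length-$n$ window of $x$, and the reverse inclusion hinges on the same unique-continuation consequence of \eqref{e.ifyoudontlikegraphs}, namely that two points of $\supp\mu$ lying in the same level-$(n-1)$ cylinder must coincide. The paper phrases the propagation as a single pass showing $z^{(j+1)} = \sigma(z^{(j)})$ for all $j$, whereas you phrase it as an induction on $s$; the content is identical.
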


\begin{proof}
If $x \in B_{\mu,n+s}$ then the cylinder of level $n+s$ that contains $x$ intersects $\supp \mu$ at some point $z$.
So the $n+s$ initial symbols of $x$ and $z$ coincide.
It follows that for each $j \in \{0,\dots,s\}$, the cylinder of level $n$ that contains $\sigma^j(x)$ also contains $\sigma^j(z)$,
showing that $x \in \sigma^{-j}(B_{\mu,n})$.

Conversely, consider a point $y \in \bigcap_{j=0}^s \sigma^{-j}(B_{\mu,n})$.
Then for each $j \in \{0,\dots,s\}$, the cylinder $C_j$ of level $n$ that contains $\sigma^j(y)$ intersects $\supp \mu$ at some point $z^{(j)}$.
If $j<s$ then the set $\sigma(C_j)$, being a cylinder of level $n-1$ that intersects the higher level cylinder $C_{j+1}$, must contain it.
In particular, the cylinder $\sigma(C_j)$ contains both points $z^{(j+1)}$ and $\sigma(z^{(j)})$ from $\supp \mu$.
Using the hypothesis $\mu \in \cC_n$ and recalling \eqref{e.ifyoudontlikegraphs}, we conclude that $z^{(j+1)} = \sigma(z^{(j)})$, for each $j \in \{0,\dots,s-1\}$.
Hence the set $D \coloneqq \bigcap_{j=0}^s \sigma^{-j}(C_j)$, which is a cylinder of level $n+s$ containing $y$, also contains $z^{(0)}$,
and therefore is contained in $B_{\mu,n+s}$.
So $y \in B_{\mu,n+s}$, as we wanted to show.
\end{proof}


As a consequence of the lemma, if $\mu \in \cC_n$ then
$\bigcap_{j=0}^\infty \sigma^{-j}(B_{\mu,n}) = \supp \mu$.
Therefore we say that the set $B_{\mu,n}$ is a \emph{basin of repellency}
for the periodic orbit that supports $\mu$.

Let $\mathsf{c}$ denote the complement operation, that is, $B^\mathsf{c} \coloneqq \Omega \setminus B$.

\begin{lemma}\label{l.cancel}
Given measures $\mu \in \cC_n$ and $\xi \in \cC_k$ with $n \le k$ and $\per(\mu) \le \per(\xi)$,
and a function $g \in \cS_k$, we have
$$
\langle g, \xi \rangle - \langle g, \mu \rangle
\le 2 (k-n+1) \|g\|_\infty  \, \xi(B_{\mu,n}^\mathsf{c}) \, .
$$
\end{lemma}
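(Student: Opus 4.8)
The plan is to exploit the basin of repellency structure: since $\mu \in \cC_n$, the set $B_{\mu,n}$ is a basin of repellency, so by Lemma \ref{l.referee} the backward iterates $\sigma^{-j}(B_{\mu,n})$ for $0 \le j \le s$ intersect down to $B_{\mu,n+s}$, and a point that stays in $B_{\mu,n}$ for a long time is forced to track the periodic orbit $\supp\mu$ closely. The key observation is that a $\xi$-generic orbit spends most of its time inside $B_{\mu,n}$ (the $\xi$-measure of the complement being the "error" $\xi(B_{\mu,n}^{\mathsf c})$), and during the long stretches it spends inside $B_{\mu,n}$ it is essentially following $\supp\mu$, along which the Birkhoff average of $g$ equals $\langle g, \mu\rangle$.

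First I would set $p \coloneqq \per(\mu)$ and $q \coloneqq \per(\xi)$, pick a periodic point $y$ generating $\xi$, and write $\langle g, \xi\rangle = \frac{1}{q}\sum_{i=0}^{q-1} g(\sigma^i y)$ and similarly $\langle g,\mu\rangle$ as an average over the orbit of a periodic point $z$ generating $\mu$. The goal is to compare these two averages. The idea is to partition the indices $i \in \{0,\dots,q-1\}$ (cyclically) according to whether $\sigma^i y$ lies in $B_{\mu,n}$ or not, grouping the "good" indices into maximal runs. Here is where $g \in \cS_k$ and $n \le k$ enter: if $\sigma^i y, \sigma^{i+1}y, \dots, \sigma^{i+s}y$ all lie in $B_{\mu,n}$, then by Lemma \ref{l.referee} the point $\sigma^i y$ lies in $B_{\mu,n+s}$, hence in a cylinder of level $n+s$ meeting $\supp\mu$; provided $s \ge k-n$, this pins down the first $k$ symbols of $\sigma^{i+j}y$ to agree with those of a corresponding point of $\supp\mu$ for $0 \le j \le s-(k-n)$, so $g(\sigma^{i+j}y) = g(\text{point of }\supp\mu)$ since $g$ is constant on level-$k$ cylinders. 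Thus on the interior of each good run (trimming off the last $k-n$ indices, and also handling the trivial case where the run is too short) the values of $g$ along the $\xi$-orbit coincide with values of $g$ along the $\mu$-orbit.

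The estimate then assembles as follows: the number of "bad" indices is at most $q\,\xi(B_{\mu,n}^{\mathsf c})$ by definition of $\xi$ as the uniform measure on its orbit; each bad index together with the trailing $k-n$ trimmed indices of the preceding good run contributes at most $(k-n+1)$ terms where we only have the crude bound $|g(\sigma^i y) - (\text{corresponding }\mu\text{-value})| \le 2\|g\|_\infty$; over the genuinely matched portions the contributions cancel against $\langle g,\mu\rangle$. The hypothesis $\per(\mu) \le \per(\xi)$ ensures that when we tile the long good runs by full $\mu$-periods the leftover is absorbed into the already-counted boundary terms, so no extra error appears. Collecting everything yields $\langle g,\xi\rangle - \langle g,\mu\rangle \le 2(k-n+1)\|g\|_\infty\,\xi(B_{\mu,n}^{\mathsf c})$.

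The main obstacle I anticipate is the bookkeeping of the "boundary" terms at the edges of the good runs — making precise that trimming $k-n$ indices from each run, plus the bad indices themselves, accounts for all non-cancelling terms, and that each such term is bounded by $2\|g\|_\infty$ while the total count is at most $(k-n+1)$ times the number of bad indices. One has to be careful that a single bad index is "charged" only once even though it borders two good runs, and that the cyclic structure (closed walk) does not introduce an off-by-one error; the factor $(k-n+1)$ rather than $2(k-n+1)$ or $(k-n)$ should drop out of a clean accounting, and getting that constant exactly right is the delicate point. The hypothesis $\per(\mu)\le\per(\xi)$ is what prevents the pathological situation where a good run is shorter than one $\mu$-period, in which case we simply treat the whole run as boundary.
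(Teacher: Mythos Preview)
Your setup is sound up to a point: partitioning the $\xi$-orbit into good runs for $B_{\mu,n}$ and trimming the last $k-n$ indices of each run does, via Lemma~\ref{l.referee}, leave exactly the indices $j$ with $\sigma^j y\in B_{\mu,k}$, so after trimming you have effectively passed to level $k$. The count ``bad $+$ trimmed $\le (k-n+1)\cdot(\text{bad at level }n)$'' is also correct. The gap is in the next step, where you claim the matched contributions ``cancel against $\langle g,\mu\rangle$'' by tiling the good runs with full $\mu$-periods. This does not work: a good run of length $L$ at level $k$ leaves a remainder $L\bmod\per(\mu)$, and summing these remainders over all runs gives an error that is \emph{not} controlled by $(k-n+1)\,\xi(B_{\mu,n}^{\mathsf c})$ --- it can be of order $\per(\mu)$ per run, and $\per(\mu)$ may be as large as $2^{n-1}$. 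Your stated role for the hypothesis $\per(\mu)\le\per(\xi)$ is also incorrect: it certainly does not prevent good runs shorter than $\per(\mu)$, nor does it make the tiling leftovers absorb into the boundary terms.

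The paper sidesteps this entirely by working at level $k$ from the outset. Since $\mu\in\cC_n\subset\cC_k$ and $\xi\in\cC_k$, both are cycles in $G_k$; letting $A,B$ be their arc sets with $m=\#A=\per(\mu)$, $\ell=\#B=\per(\xi)$, $i=\#(A\cap B)$, one writes
\[
\langle g,\xi\rangle-\langle g,\mu\rangle
=\tfrac1\ell\!\sum_{B\setminus A}\!g-\tfrac1m\!\sum_{A\setminus B}\!g+\bigl(\tfrac1\ell-\tfrac1m\bigr)\!\sum_{A\cap B}\!g,
\]
and bounds this by $\bigl[\tfrac{\ell-i}{\ell}+\tfrac{m-i}{m}+\bigl|\tfrac{i}{m}-\tfrac{i}{\ell}\bigr|\bigr]\|g\|_\infty$. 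This is where $m\le\ell$ actually enters: it fixes the sign of the last term, and the three fractions telescope to $2(\ell-i)/\ell=2\xi(B_{\mu,k}^{\mathsf c})$. Only then is Lemma~\ref{l.referee} invoked, as a union bound, to convert $\xi(B_{\mu,k}^{\mathsf c})\le(k-n+1)\xi(B_{\mu,n}^{\mathsf c})$. So the correct replacement for your tiling step is: compare $\sum_{A\cap B}g$ not to full $\mu$-periods but directly to $\sum_A g$; the discrepancy is $\sum_{A\setminus B}g$, which has only $m-i\le\ell-i$ terms.
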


Note that $n \le k$ does not imply that $\per(\mu) \le \per(\xi)$ in the statement above;
an example appears in Remark~\ref{r.complexity}.

\begin{proof}[Proof of Lemma~\ref{l.cancel}]
Write $m \coloneqq \per(\mu)$ and $\ell \coloneqq \per(\xi)$.
The step function $g \in \cS_k$ induces a function on the arcs of $G_k$,
which by simplicity we denote by the same symbol.
Let $A$ and $B$ be respectively the sets of arcs
in the cycles $W_k(\mu)$ and $W_k(\xi)$ (in the notation from \S\ref{ss.itineraries}).
So $\# A = m$ and $\# B = \ell$.
Let $i \coloneqq \#(A\cap B)$.
Then:
\begin{align*}
\langle g, \xi \rangle - \langle g, \mu \rangle
&= \frac{1}{\ell} \sum_{a \in B} g(a) - \frac{1}{m} \sum_{a \in A} g(a) \\
&= \frac{1}{\ell} \sum_{a \in B \setminus A} g(a) - \frac{1}{m} \sum_{a \in A \setminus B} g(a) +
\left(\frac{1}{\ell} - \frac{1}{m}\right) \sum_{a \in A \cap B} g(a)  \\
&\le \left\{ \frac{\ell-i}{\ell} + \frac{m-i}{m} + \left|\frac{i}{m} - \frac{i}{\ell}\right| \right\} \|g\|_\infty \, .
\end{align*}
By assumption, $m \le \ell$, and so the quantity between curly brackets is $2(\ell-i)/\ell = 2\xi(B_{\mu,k}^\mathsf{c})$.
Using Lemma~\ref{l.referee} and invariance of $\xi$,
$$
\xi(B_{\mu,k}^\mathsf{c}) = \xi \left( \bigcup_{j=0}^{k-n} \sigma^{-j}(B_{\mu,n}^\mathsf{c}) \right)
\le (k-n+1)\xi(B_{\mu,n}^\mathsf{c}) \, .
$$
The lemma follows.
\end{proof}

\subsection{Proof of the gap criterion}

Fix $f \in C^0(\Omega)$ satisfying the gap condition \eqref{e.gap} for some $n \in \N$.
Let $\mu \in \cC_n$ be the maximizing measure for $A_n f$,
and fix an arbitrary $\nu \in \cM_\sigma$.
We need to prove that $\langle f, \mu \rangle \ge \langle f, \nu \rangle$,
with equality only if $\nu = \mu$.

For each integer $k \ge n$, we will deal with the basin of repellency
$B_{k,\mu}$ introduced in \S\ref{ss.basin},
which we will denote by $B_k$ for simplicity.

For each $k \ge n$, consider convex combinations of the form
\begin{equation}\label{e.nu_k_decomp}
\nu_k = \sum_{\xi \in \cC_k} p_k(\xi) \, \xi
\end{equation}
(so the numbers $p_k(\xi)$ are nonnegative and their sum over $\xi$ is $1$)
such that
\begin{equation}\label{e.nu_k_property}
\pi_k(\nu_k) = \pi_k(\nu).
\end{equation}
Such combinations always exist, but uniqueness may fail.
We choose and fix a combination such that the following quantity:
\begin{equation}\label{e.uk}
u_k \coloneqq \sum_{\substack{\xi \in \cC_k \\ \per(\xi) \ge \per(\mu)}} p_k(\xi) \, \xi(B_n)
\end{equation}
is minimal among all possible decompositions; its existence follows from a compactness argument.

For example, if $\nu = \beta$ (the unbiased Bernoulli measure) and $\per(\mu) = 4$ then $u_3 = 0$, because we can take $\nu_3$ as a convex combination of of measures in $\cC_3$
of periods less than $4$, namely
$\nu_3 = \tfrac{1}{8} \, \overline{0} + \tfrac{1}{8} \, \overline{1} + \tfrac{3}{8} \, \overline{001} + \tfrac{3}{8} \, \overline{011}$.

\begin{lemma}\label{l.monotonicity}
The sequence $(u_k)_{k \ge n}$ is monotone nondecreasing.
\end{lemma}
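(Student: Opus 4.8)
The goal is to show $u_{k+1} \ge u_k$ for each $k \ge n$. The idea is to take the optimal decomposition $\nu_{k+1} = \sum_{\xi \in \cC_{k+1}} p_{k+1}(\xi)\,\xi$ realizing $u_{k+1}$ and push it forward to level $k$ via the projection $\pi_k$, obtaining a decomposition at level $k$ whose associated quantity is no larger than $u_{k+1}$; since $u_k$ is the \emph{minimum} over all level-$k$ decompositions, this gives $u_k \le u_{k+1}$.

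First I would observe that each $\xi \in \cC_{k+1}$ is a periodic measure, hence (as noted in Remark~\ref{r.complexity}) it lies in $\bigcup_{m\ge 1}\cC_m$; in particular, applying Proposition~\ref{p.faces} to the circulation polytope of $G_k$ (equivalently, using that $\pi_k(\xi) \in R_k$ decomposes into vertices of $R_k$, which correspond bijectively to $\cC_k$ by Proposition~\ref{p.rot_and_dbg}), we may write $\pi_k(\xi) = \sum_{\eta \in \cC_k} q_\xi(\eta)\,\pi_k(\eta)$ for suitable weights $q_\xi(\eta) \ge 0$ summing to $1$. Combining, set $p_k(\eta) \coloneqq \sum_{\xi \in \cC_{k+1}} p_{k+1}(\xi)\,q_\xi(\eta)$; then $\sum_{\eta} p_k(\eta) = 1$ and $\pi_k\big(\sum_\eta p_k(\eta)\,\eta\big) = \sum_\xi p_{k+1}(\xi)\,\pi_k(\xi) = \pi_k(\nu_{k+1}) = \pi_k(\nu)$, so $\nu_k \coloneqq \sum_\eta p_k(\eta)\,\eta$ is a legitimate competitor in the definition~\eqref{e.uk} of $u_k$.

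The crucial point is then to compare $\sum_{\eta:\,\per(\eta)\ge\per(\mu)} p_k(\eta)\,\eta(B_n)$ with $u_{k+1} = \sum_{\xi:\,\per(\xi)\ge\per(\mu)} p_{k+1}(\xi)\,\xi(B_n)$. Here I expect the main obstacle: one must argue that whenever $q_\xi(\eta) > 0$ and $\per(\eta) \ge \per(\mu)$, the measure $\xi$ also satisfies $\per(\xi) \ge \per(\mu)$ (so that the "short-period" mass at level $k+1$ only ever redistributes to "short-period" measures at level $k$), and moreover that $\eta(B_n) \le$ the average of $\xi(B_n)$ over the $\xi$ contributing to $\eta$ — or more precisely that the total mass $\sum_{\per(\eta)\ge\per(\mu)} p_k(\eta)\,\eta(B_n)$ is bounded by $\sum_{\per(\xi)\ge\per(\mu)} p_{k+1}(\xi)\,\xi(B_n)$. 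The right way to handle this is to note that $\eta(B_n) = \llangle \chi_{B_n}^{(k)},\pi_k(\eta)\rrangle$ depends only on $\pi_k(\eta)$, so $\sum_\eta p_k(\eta)\,\eta(B_n) = \nu_k(B_n) = \nu_{k+1}(B_n) = \sum_\xi p_{k+1}(\xi)\,\xi(B_n)$ (since $B_n$ is a union of level-$n$ cylinders and $n \le k$); thus the \emph{full} sums agree, and one only needs the period-comparison claim to control the restriction to large-period terms. For that claim, I would use that if $q_\xi(\eta) > 0$ then $\eta$ appears in the vertex decomposition of $\pi_k(\xi)$, and invariance/minimality forces the supports to be compatible — concretely, a short-period $\xi$ (with $\per(\xi) < \per(\mu)$) has $\pi_k(\xi)$ a vertex of $R_k$ already when $\xi \in \cC_k$, forcing $\eta = \xi$, hence $\per(\eta) = \per(\xi) < \per(\mu)$, so no large-period $\eta$ can receive mass from a small-period $\xi$. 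Feeding this back: $\sum_{\per(\eta)\ge\per(\mu)} p_k(\eta)\,\eta(B_n) \le \sum_{\per(\xi)\ge\per(\mu)}p_{k+1}(\xi)\,\xi(B_n) = u_{k+1}$, and by minimality $u_k \le u_{k+1}$, which is the desired monotonicity. The delicate bookkeeping — making the period-comparison argument airtight when $\xi$ has large period but decomposes at level $k$ into a mix of large- and small-period $\eta$'s, and checking that the small-period part of that decomposition carries no $B_n$-mass or is otherwise harmless — is where I expect the real work to lie, and may require invoking minimality of the chosen level-$(k+1)$ decomposition as well.
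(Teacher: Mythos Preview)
Your overall strategy matches the paper's: push the optimal level-$(k{+}1)$ decomposition down to level $k$ via $\pi_k$, obtain a competitor for $u_k$, and use minimality of $u_k$. The logic you outline is sound once the period-comparison claim (``no large-period $\eta$ receives mass from a small-period $\xi$'') is established, and your use of $\chi_{B_n}\in\cS_k$ to get $\sum_\eta q_\xi(\eta)\,\eta(B_n)=\xi(B_n)$ is exactly what the paper does.

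The gap is in your justification of the period-comparison claim. Your argument that ``a short-period $\xi$ has $\pi_k(\xi)$ a vertex of $R_k$ already when $\xi\in\cC_k$'' only treats the case $\xi\in\cC_k$, which is not automatic: an element $\xi\in\cC_{k+1}$ with $\per(\xi)<\per(\mu)$ need not lie in $\cC_k$ (e.g.\ $\overline{01011}$ in Remark~\ref{r.complexity} has period $5$ but recursive complexity $5$). What actually holds, and what the paper proves, is the stronger implication
\[
q_\xi(\eta)>0 \ \Longrightarrow \ \per(\eta)\le\per(\xi)
\]
for \emph{every} $\xi\in\cC_{k+1}$. The reason is a one-line node count in $G_k$: the closed walk $W_k(\xi)$ has length $\per(\xi)$ and hence visits at most $\per(\xi)$ distinct nodes; any cycle $W_k(\eta)$ appearing in the vertex decomposition of $\pi_k(\xi)$ lies in the support of that circulation and therefore visits only those nodes; but since $\eta\in\cC_k$, the cycle $W_k(\eta)$ visits exactly $\per(\eta)$ nodes. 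This gives the implication at once, with no case distinction and no appeal to minimality of the level-$(k{+}1)$ decomposition.

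Finally, your closing worry points in the wrong direction: a large-period $\xi$ decomposing into a mix of large- and small-period $\eta$'s is harmless, since all those terms sit on the right-hand side and the left-hand side picks up only a subset of them. The dangerous direction would be a small-period $\xi$ producing a large-period $\eta$, and the node-count argument above is precisely what rules that out.
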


\begin{proof}
Fix $k \ge n$.
For each $\eta \in \cC_{k+1}$, write the circulation $\pi_k(\eta)$ as a convex combination of
vertices of $R_k$:
\begin{equation}\label{e.etas}
\pi_k(\eta) = \sum_{\xi \in \cC_k} q_\eta(\xi) \, \pi_k(\xi) \, .
\end{equation}
Note that
\begin{equation}\label{e.per_ineq}
\per(\xi) > \per(\eta)  \ \Rightarrow \  q_\eta(\xi) = 0 \, ;
\end{equation}
indeed for each $\xi \in \cC_k$ the closed walk $W_k(\xi)$
visits exactly $\per(\xi)$ nodes of $G_k$,
while the closed walk $W_k(\eta)$ visits at most $\per(\eta)$ nodes.

On the other hand, 
$$
\pi_k(\nu) = \pi_k (\pi_{k+1}(\nu)) =
\pi_k \left( \sum_{\eta \in \cC_{k+1}} p_{k+1}(\eta) \, \eta \right) =
\sum_{\eta \in \cC_{k+1}} p_{k+1}(\eta) \, \pi_k(\eta).
$$
Substituting the expressions \eqref{e.etas} and manipulating, we obtain:
$$
\pi_k(\nu) = \pi_k(\tilde \nu), \quad \text{where} \quad
\tilde \nu \coloneqq \sum_{\xi \in \cC_k} \left( \sum_{\eta \in \cC_{k+1}} p_{k+1}(\eta) q_\eta(\xi) \right) \pi_k(\xi) \, ,
$$
which should be compared with \eqref{e.nu_k_decomp} and \eqref{e.nu_k_property}.
Therefore:
\begin{alignat*}{2}
u_k
&\le \sum_{\substack{\xi \in \cC_k \\ \per(\xi) \ge \per(\mu)}}
\left( \sum_{\eta \in \cC_{k+1}} p_{k+1}(\eta) q_\eta(\xi) \right) \xi(B_n)
&\enskip &\text{(by minimality of \eqref{e.uk})} \\
&\le \sum_{\substack{\eta \in \cC_{k+1} \\ \per(\eta) \ge \per(\mu)}}
\sum_{\xi \in \cC_k} p_{k+1}(\eta) q_\eta(\xi) \, \xi(B_n)
&\enskip &\text{(by \eqref{e.per_ineq})}  \\
&= \sum_{\substack{\eta \in \cC_{k+1} \\ \per(\eta) \ge \per(\mu)}}
p_{k+1}(\eta) \left( \sum_{\xi \in \cC_k} q_\eta(\xi) \, \xi \right)(B_n) \\
&= \sum_{\substack{\eta \in \cC_{k+1} \\ \per(\eta) \ge \per(\mu)}}
p_{k+1}(\eta) \, \eta(B_n)
&\enskip &\text{(using \eqref{e.etas}, $\chi_{B_n} \in \cS_n \subset \cS_k$, \eqref{e.projection_property})}  \\
&= u_{k+1} \, ,
\end{alignat*}
thus proving monotonicity.
\end{proof}

We are ready to prove the gap criterion:

\begin{proof}[Proof of Lemma~\ref{l.gap}]
Let $f$, $n$, $\mu$, $\nu$ be as explained above.
Write
$$
f = A_n f + \sum_{k=n}^\infty g_k \, , \quad \text{where} \quad
g_k \coloneqq \sum_{|\omega|=k} c_\omega(f) h_\omega \, .
$$
Then, using the projection property \eqref{e.projection_property}
together with \eqref{e.nu_k_decomp} and \eqref{e.nu_k_property}, we have:
\begin{align}
\langle f, \mu - \nu \rangle
&=
\langle A_n f, \mu - \nu \rangle - \sum_{k=n}^\infty \langle g_k, \nu - \mu \rangle \notag \\
&=
\langle A_n f, \mu - \nu_n \rangle - \sum_{k=n}^\infty \langle g_k, \nu_k - \mu \rangle \notag \\
&= \underbrace{\sum_{\xi \in \cC_n} p_n(\xi) \langle  A_n f, \mu -
\xi \rangle}_{\circled{1}} \ - \ \sum_{k=n}^\infty \underbrace{\sum_{\xi \in
\cC_k} p_k(\xi) \langle g_k, \xi - \mu \rangle}_{\circled{2}_k} \,  .
\label{e.onetwo}
\end{align}
We will estimate the terms $\circled{1}$ and $\circled{2}_k$ from below and from
above, respectively.

On the one hand, by definition of the gap,
for all $\xi \in \cC_n$ different from $\mu$ we have
$\langle A_n f, \mu - \xi \rangle \ge \gap_n(A_n f)$,
and therefore
$$
\circled{1} \ge (1- p_n(\mu))  \gap_n(A_n f)
            \ge (1- u_n)  \gap_n(A_n f)  \, .
$$
On the other hand,
if $\xi \in \cC_k$ has $\per(\xi) \ge \per(\mu)$ then by Lemma~\ref{l.cancel},
$$
\langle g_k, \xi - \mu \rangle \le 2\|g_k\|_\infty \, \xi(B_k^\mathsf{c}) \le 2 (k-n+1)\|g_k\|_\infty \,  \xi(B_n^\mathsf{c}) \, .
$$
In the case that $\per(\xi) < \per(\mu)$ we use the trivial estimate
$$
\langle g_k, \xi - \mu \rangle \le 2\|g_k\|_\infty \le 2 (k-n+1) \|g_k\|_\infty \, .
$$
So we have:
\begin{align*}
\circled{2}_k &\le 2(k-n+1) \|g_k\|_\infty \left\{ \sum_{\substack{\xi \in
\cC_k \\ \per(\xi) <   \per(\mu)}} p_k(\xi)  \ + \sum_{\substack{\xi
\in \cC_k \\ \per(\xi) \ge \per(\mu)}} p_k(\xi) \xi(B_n^\mathsf{c})
\right\} \\
&=   2(k-n+1) \|g_k\|_\infty \, (1-u_k) \\
&\le 2(k-n+1) \|g_k\|_\infty \, (1-u_n),
\end{align*}
where in the last step we used Lemma~\ref{l.monotonicity}.
Substituting these estimates into \eqref{e.onetwo}, we obtain
$$
\langle f, \mu - \nu \rangle \ge \delta (1-u_n) ,
\quad \text{where} \quad
\delta \coloneqq \gap_n(A_n f) - \sum_{k=n}^\infty 2(k-n+1) \|g_k\|_\infty  \, .
$$
Note that $\|g_k\|_\infty = \frac{1}{2} \sup_{|\omega|=k}|c_\omega(f)|$, and so
assumption \eqref{e.gap} means that $\delta>0$.
Moreover, by definition $u_n \le \nu_n(B_n)$, so
$$
\langle f, \mu - \nu \rangle \ge \delta \, \nu_n(B_n^\mathsf{c}) \, .
$$
In particular $\langle f, \mu - \nu \rangle \ge 0$.
Assume that equality holds; then $\nu_n(B_n) = 1$.
Since $B_n$ is a basin of repellency for $\mu$, we have $\nu_n = \mu$.
So $\pi_n(\nu) = \pi_n(\mu)$ and
by the injectivity property of the rotation set $R_n$
we conclude that $\nu = \mu$.
This proves that $\mu$ is the unique maximizing measure for $f$,
as claimed.
\end{proof}

\section{Proof of prevalence}\label{s.proof}

In this section we prove Theorem~\ref{t.brick} which, as explained before,
yields Theorem~\ref{t.prevalence} as a corollary.

A \emph{box} $Q \subset \R^d$ is a product $I_1 \times \cdots \times I_d$
of compact intervals of positive lengths,
the least of which is called the \emph{thickness} of the box and
is denoted by $\tau(Q)$.
Lebesgue measure on $\R^d$ is denoted by $\Leb_d$.
We will estimate the measure of slices of a box, as in Fig.~\ref{f.slice}.

\begin{figure}[htb]
\begin{tikzpicture}
   \clip (-.15,-.15) rectangle (3,1.25);  
   \filldraw[gray] (.7,0)--(1,0)--(0,1)--(0,.7)--cycle; 
   \draw[thick] (0,0) rectangle (2.85,1.1); 
   \foreach \x in {-.2,.1,.4,...,4.1}
       \draw (\x+.5,-.5)--(\x-1.6,1.6); 
\end{tikzpicture}
\caption{Slicing a box.}\label{f.slice}
\end{figure}

If $L$ is a linear functional on $\R^d$, say $L(x) = \sum_{i=1}^d \lambda_i x_i$,
then its \emph{$\infty$-norm} is $\|L\|_\infty \coloneqq \max_i |\lambda_i|$.

\begin{lemma}\label{l.slice}
Let $Q \subset \R^d$ be a box.
Let $\phi \colon \R^d \to \R$ be an affine function with linear part $L \neq 0$.
Then, for all $\delta > 0$,
$$
\frac{\Leb_d(\{x\in Q \st |\phi(x)| \le \delta \})}{\Leb_d(Q)}
\le \frac{2\delta}{\|L\|_\infty \, \tau(Q) } \, .
$$
\end{lemma}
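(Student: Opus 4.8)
The plan is to reduce the estimate to a one-dimensional computation via Fubini's theorem, slicing the box $Q$ along a coordinate direction in which the linear part $L$ is as large as possible.

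Concretely, I would write $\phi(x) = c + \sum_{i=1}^d \lambda_i x_i$ and fix an index $i_0$ with $|\lambda_{i_0}| = \|L\|_\infty$; this number is positive because $L \neq 0$. Decomposing $Q = I_1 \times \cdots \times I_d$, write a point of $\R^d$ as $(t, \hat x)$ with $t$ the $i_0$-th coordinate and $\hat x$ the remaining $d-1$ coordinates, which range over $\widehat Q \coloneqq \prod_{j \neq i_0} I_j$. For each fixed $\hat x$, the map $t \mapsto \phi(t, \hat x)$ is affine with slope $\lambda_{i_0} \neq 0$, so the sublevel set $\{ t \in \R \st |\phi(t,\hat x)| \le \delta \}$ is an interval of length at most $2\delta/|\lambda_{i_0}| = 2\delta/\|L\|_\infty$; a fortiori the same bound holds after intersecting with $I_{i_0}$.

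Integrating this slicewise bound over $\hat x \in \widehat Q$ gives
$$
\Leb_d\big( \{ x \in Q \st |\phi(x)| \le \delta \} \big)
\le \frac{2\delta}{\|L\|_\infty} \, \Leb_{d-1}(\widehat Q)
= \frac{2\delta}{\|L\|_\infty} \cdot \frac{\Leb_d(Q)}{|I_{i_0}|} \, .
$$
Since $\tau(Q)$ is by definition the least of the lengths $|I_1|, \dots, |I_d|$, one has $|I_{i_0}| \ge \tau(Q)$, and dividing through by $\Leb_d(Q) > 0$ yields the claimed inequality.

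I do not expect any genuine obstacle here: the statement is essentially a one-variable fact packaged with Fubini, and the only points requiring attention are the choice of the coordinate $i_0$ realizing $\|L\|_\infty$ and the trivial observation that the corresponding edge of $Q$ has length at least $\tau(Q)$. (If $2\delta \ge \|L\|_\infty \, \tau(Q)$ the asserted bound is $\ge 1$ and hence vacuous, so one may harmlessly assume the reverse inequality if convenient.)
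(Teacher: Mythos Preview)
Your proposal is correct and follows essentially the same approach as the paper: choose a coordinate realizing $\|L\|_\infty$, bound the one-dimensional slices by $2\delta/\|L\|_\infty$, apply Fubini, and then use $|I_{i_0}| \ge \tau(Q)$. The paper's proof is identical up to notation (it simply assumes $i_0 = 1$ without loss of generality).
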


\begin{proof}
Let $Q = I_1 \times \cdots \times I_d$ and $L(x) = \sum_{i=1}^d \lambda_i x_i$
Without loss of generality, assume that $\|L\|_\infty = |\lambda_1|$.
For each $y \in I_2 \times \cdots \times I_d$,
the section $\{x_1 \in I_1 \st |\phi(x_1,y)| \le \delta \}$
has measure at most $2\delta/|\lambda_1|$.
So, by Fubini theorem,
\[
\frac{\Leb_d(\{x\in Q \st |\phi(x)| \le \delta \})}{\Leb_d(Q)}
\le \frac{2\delta}{|\lambda_1|} \,\frac{\Leb_{d-1}(I_2 \times \cdots \times I_d)}{\Leb_d(Q)}
\le \frac{2\delta}{\|L\|_\infty \, \tau(Q)} \, .
\qedhere
\]
\end{proof}

\begin{proof}[Proof of Theorem~\ref{t.brick}]
Fix an evanescent sequence $\ba$, and an admissible gauge $\bb$ with respect to $\ba$.
Fix an arbitrary $f_0 \in C^\ba(\Omega)$.
Recall that our aim to prove that for $\P_\bb$-almost every function
$g$ in the Hilbert brick $\cH_\bb$, the function $f = f_0 + g$
has the locking property.
For that purpose we will use the gap criterion version~2 (Corollary~\ref{c.gap}).
Note that hypothesis~\eqref{e.summability} is automatic from evanescence.

For each $n \in \N$, let
\begin{align*}
\delta_n &\coloneqq \sum_{k=n}^\infty (k-n+1) \big( a_k  \Lip_\ba(f_0) + \overline{b}_k \big)
\qquad \text{and} \\
\cG_n &\coloneqq \big\{ g \in \cH_\bb \st \gap_n \big( A_n(f_0+g) \big) > \delta_n \big\} \, .
\end{align*}
In view of \eqref{e.Lip_to_Haar} and the definition of the Hilbert brick $\cH_\bb$,
the number $\delta_n$ is an upper bound for the RHS of \eqref{e.gap} for
all functions of the form $f = f_0 + g$ with $g \in \cH_\bb$.
So by Corollary~\ref{c.gap} we have:
\begin{equation}\label{e.implication}
g \in \bigcup_n \cG_n \ \Rightarrow \
f_0 + g \in \lock \, .
\end{equation}
So in order to prove Theorem~\ref{t.brick} it is sufficient to show that
$\P_\bb(\cG_n) \to 1$ as $n \to \infty$.

Fix a positive integer $n$.
Let $d_n \coloneqq 2^n-1$ and let
$\tilde{\pi}_n \colon C^\ba(\Omega) \to \R^{d_n}$ be the map
$$
f \mapsto \big( c_\omega(f) \big)_{\omega \in \Omega^* \, , \, |\omega| < n}
$$
that associates to a function $f$ its Haar coefficients
of level less than $n$ (the initial coefficient $c(f)$ being disregarded).
Then $\tilde{\pi}_n(\cH_\bb)$ equals the box
$$
Q_n \coloneqq \prod_{|\omega| < n} [-b_\omega, b_\omega] \, ;
$$
moreover the push-forward of $\P_\bb$ under $\tilde{\pi}_n$
is the normalized Lebesgue measure on $Q_n$.
For each measure $\mu$ on $\Omega$,
define a linear map $I_\mu \colon \R^{d_n} \to \R$ by
$$
I_\mu \big( (x_\omega)_\omega \big) \coloneqq \sum_{\omega} \langle h_\omega, \mu \rangle x_\omega \, .
$$
Then $\langle A_n f, \mu \rangle = c(f) + I_\mu(\tilde\pi_n(f))$ for every $f \in C^\ba(\Omega)$.

Let $G_n \coloneqq \tilde{\pi}_n (\cG_n)$.
Note that $\cG_n = \tilde{\pi}_n^{-1} (G_n)$, because the definition of $\cG_n$ only involves the initial Haar coefficients.
Write $\cG_n^\mathsf{c} = \cH_\bb \setminus \cG_n$ and $G_n^\mathsf{c} = Q_n \setminus G_n$.
If $g \in \cG_n^\mathsf{c}$ then there exist distinct elements $\mu$, $\nu$ of $\cC_n$
such that
$$
|\langle A_n (f_0+g), \mu -\nu \rangle| \le \delta_n.
$$
Moreover, $\mu$, $\nu$ can be chosen as \emph{neighbors}
in the sense that $[\pi_n(\mu), \pi_n(\nu)]$ is an edge of the polytope $R_n$.
It follows that
$$
G_n^\mathsf{c} \subset \bigcup_{\substack{\mu \neq \nu \in \cC_n \\ \text{neighboors}}}
\left\{ x \in Q_n \st \big| (I_\mu-I_\nu)(x^{(0)} + x) \big| \le \delta_n \right\} \, ,
$$
where $x^{(0)} \coloneqq \tilde{\pi}_n(f_0)$.
In particular, by Lemma~\ref{l.slice},
\begin{equation}\label{e.ugly}
\P_\bb(\cG_n^\mathsf{c})
= \frac{\Leb_{d_n}(G_n^\mathsf{c})}{\Leb_{d_n}(Q_n)}
\le \frac{2 \delta_n}{\tau(Q_n)}
\sum_{\substack{\mu \neq \nu \in \cC_n \\ \text{neighboors}}} \|I_\mu - I_\nu\|_\infty^{-1} \, .
\end{equation}

Given $\mu \neq \nu$ in $\cC_n$,
there exists a node of the graph $G_n$ that is visited
by the cycle $C_n(\mu)$
but not by cycle $C_n(\nu)$.
That is, there exists a word $\omega$ of length $n-1$ such that
$$
\langle h_\omega, \mu \rangle = \frac{\pm 1}{2 \per(\mu)}
\quad \text{and} \quad
\langle h_\omega, \nu \rangle = 0 \, .
$$
We have $\per(\mu) \le 2^{n-1}$ (since the cycle $C_n(\mu)$ contains no repeated nodes),
and therefore $\|I_\mu - I_\nu\|_\infty \ge 2^{-n}$.
On the other hand, the number of pairs of neighbors $\{\mu,\nu\}$ in $\cC_n$ is
equal to the number of edges of the polytope $R_n$.
We have seen in \S\ref{ss.structure} that $R_n$ has no more than $2^{2^n}$ faces of all dimensions.
Substituting these estimates back in \eqref{e.ugly} we obtain:
\begin{equation}\label{e.uglier}
\P_\bb(\cG_n^\mathsf{c})
\le 2^{2^n + n + 1} \delta_n / \tau(Q_n) \, .
\end{equation}
Recall the evanescence and admissibility conditions \eqref{e.eva}, \eqref{e.adm},
which can be rewritten as:
$$
a_n = O \left( 2^{-2^{n+1}} a_{n-1} \right) \, , \qquad
\bar{b}_n = o(a_n) \, , \qquad
a_n = 2^{O(n)}  \underline{b}_n  \, .
$$
Then:
$$
\delta_n = \sum_{k=n}^\infty (k-n+1) O(a_k) = O(a_n) = O \left( 2^{-2^{n+1}} a_{n-1} \right) \, .
$$
Also,
$$
\bar{b}_n   = o(a_n)
            = O\left( 2^{-2^{n+1}}  a_{n-1} \right)
            = O\left( 2^{-2^{n+1} + O(n)}  \underline{b}_{n-1} \right) \, .
$$
In particular, if $n$ is large enough then $\min\{\underline{b}_0, \underline{b}_1, \dots , \underline{b}_{n-1} \} = \underline{b}_{n-1}$,
that is, the box $Q_n$ has thickness $\tau(Q_n) = \underline{b}_{n-1}$.
Substituting these estimates back in \eqref{e.uglier} we obtain:
$$
\P_\bb(\cG_n^\mathsf{c})
=
2^{2^n - 2^{n+1} + O(n)} a_{n-1} / \underline{b}_{n-1}
=
2^{-2^n +O(n)}
= o(1),
$$
As remarked before, it follows from \eqref{e.implication}
that $f_0 + g \in \lock$ for $\P_\bb$-a.e.\ $g$,
as we wanted to prove.
\end{proof}

\section{Directions for future research}\label{s.questions}

The results of this paper motivate a number of questions.

Consider the property of uniqueness of the maximizing measure
(which is of course weaker than the locking property).
This property is known to be topologically generic in many function spaces: see \cite{CLT,Jenkinson_survey}.
So it is natural to ask whether the uniqueness property is prevalent in function spaces
larger than those considered in this paper.

The most obvious open question is whether our prevalence results hold on larger function spaces.
The reader will have noticed that several estimates in the proof of Theorem~\ref{t.brick}
are very crude.
Nevertheless, it is not clear how the evanescence condition \eqref{e.eva}
could be significantly weakened.
One of the obstacles is the fact that
low-period vertices in $R_n$ belong to many edges:
for example the two vertices of period~$1$
are connected by edges to all other vertices.
Such ``edgy'' vertices give rise to many potential gaps to be controlled.
On the other hand, those vertices seem to be ``pointy''
and should typically generate large gaps, though we have not been able to formalize this.
On the other extreme we have the vertices of maximum period, namely the hamiltonian cycles (see Remark~\ref{r.hamiltonian}).
These vertices seem to be ``blunt''.
On the positive side it can be shown that each of them belongs to exactly $2^{n-1}$ edges,
which is the minimum number allowed by the dimension of the polytope.
To summarize, an improvement of our methods seems to require a finer understanding of the geometry and the combinatorics of the polytopes $R_n$ (or their duals).

Ultimately one would like to go beyond the locking property
and formalize the conjecture of Hunt and Ott~\cite{HO} (mentioned in Remark~\ref{r.decay} above)
that maximizing measures typically have \emph{low} period.

\medskip

\begin{fund}
J.B.\ was partially supported by projects Fondecyt 1140202 and
Anillo ACT1103 (Center of Dynamical Systems and Related Fields), and REDES 140138 (International Research Network on Dynamical Systems with Mild Hyperbolic Behaviour). Y.Z.\ was partially supported by project Fondecyt 3130622. 
\end{fund}

\begin{ack}
We thank Juan Rivera-Letelier for useful discussions and Congping Lin for assistance with numerical simulations.
We also thank the referee for a number of corrections and valuable suggestions, including simplification of a couple of arguments. Y. Zhang would like to thank IMPAN for the hospitality and financial support to attend the Simons Semester, during which this work was finalized.
\end{ack}


\medskip

\begin{small}
    \noindent
	\textsc{Jairo Bochi} \quad \email{\href{mailto:jairo.bochi@mat.puc.cl}{jairo.bochi@mat.puc.cl}} \quad \href{http://www.mat.uc.cl/~jairo.bochi}{www.mat.uc.cl/$\sim$jairo.bochi}
	
	\noindent \textsc{Yiwei Zhang} \quad \email{\href{mailto:yzhang@mat.puc.cl}{yzhang@mat.puc.cl}}

	\smallskip
	
    \noindent \textsc{Facultad de Matem\'aticas, Pontificia Universidad Cat\'olica de Chile. Avenida Vicu\~na Mackenna 4860, Santiago, Chile}

\end{small}

\end{document}